\newtheorem{theorem}{Theorem}[section]
\newtheorem{definition}[theorem]{Definition}
\newtheorem{example}[theorem]{Example}
\newtheorem{corollary}[theorem]{Corollary}
\newtheorem{proposition}[theorem]{Proposition}
\newtheorem{problem}[theorem]{Problem}
\newcommand{\minusre}{\hspace{0.3em}\raisebox{0.3ex}{\sl \tiny /}\hspace{0.3em}}
\newcommand{\minusli}{\hspace{0.3em}\raisebox{0.3ex}{\sl \tiny $\setminus $}\hspace{0.3em}}
\newcommand{\lex}{\,\overrightarrow{\times}\,}
\newcommand{\Ker}{\mbox{\rm Ker}}
\newcommand{\Infinit}{\mbox{\rm Infinit}}
\def\rd{\mathord{/}}
\begin{document}
\title[Lexicographic Pseudo MV-algebras]{Lexicographic Pseudo MV-algebras}
\author[Anatolij Dvure\v{c}enskij]{Anatolij Dvure\v censkij$^{1,2}$}
\date{}%
\maketitle

\begin{center}  \footnote{Keywords: Pseudo MV-algebra,  $\ell$-group, strong unit, lexicographic product, ideal, lexicographic ideal, $(H,u)$-perfect pseudo MV-algebra, strongly $(H,u)$-perfect pseudo MV-algebra, $(H,u)$-lexicographic pseudo MV-algebra, weakly $(H,u)$-lexicographic pseudo MV-algebra

AMS classification: 06D35, 03G12

This work was supported by  the Slovak Research and Development Agency under contract APVV-0178-11,  grant VEGA No. 2/0059/12 SAV, and GA\v{C}R 15-15286S.
 }
Mathematical Institute,  Slovak Academy of Sciences\\
\v Stef\'anikova 49, SK-814 73 Bratislava, Slovakia\\
$^2$ Depart. Algebra  Geom.,  Palack\'{y} University\\
17. listopadu 12, CZ-771 46 Olomouc, Czech Republic\\

E-mail: {\tt dvurecen@mat.savba.sk}
\end{center}

\begin{abstract}
A lexicographic pseudo MV-algebra is an algebra that is isomorphic to an interval in the lexicographic product of a linear unital group with an arbitrary $\ell$-group. We present conditions when a pseudo MV-algebra is lexicographic. We show that a key condition is the existence of a lexicographic ideal, or equivalently, a case when the algebra can be split into comparable slices indexed by elements of the interval $[0,u]$ of some unital linearly ordered group $(H,u)$. Finally, we show that fixing $(H,u)$, the category of $(H,u)$-lexicographic pseudo MV-algebras is categorically equivalent to the category of $\ell$-groups.
\end{abstract}

\section{Introduction}

MV-algebras are the algebraic counterpart of the infinite-valued \L ukasiewicz sentential calculus introduced by Chang in \cite{Cha}.
Perfect MV-algebras were characterized as MV-algebras where each element is either infinitesimal or co-infinitesimal. Therefore, they have no parallels in the realm of Boolean algebras because perfect MV-algebras are not semisimple. The logic of perfect pseudo MV-algebras has a counterpart in the  Lindenbaum algebra of the first order \L ukasiewicz logic which is not semisimple, because the valid but unprovable formulas are precisely the formulas that correspond to co-infinitesimal elements of the Lindenbaum algebra, see e.g. \cite{DiGr}. Therefore, the study of perfect MV-algebras is tightly connected with this important phenomenon of the first order \L ukasiewicz logic.

Recently, two equivalent non-commutative generalizations of MV-algebras, called pseudo MV-algebras in \cite{GeIo} or GMV-algebras in \cite{Rac}, were introduced. They are used for algebraic description of non-commutative fuzzy logic, see \cite{Haj}. For them the author \cite{151} generalized a well-known Mundici's representation theorem, see e.g. \cite[Cor 7.1.8]{CDM}, showing that every pseudo MV-algebra is always an interval in a unital $\ell$-group not necessarily Abelian.

From algebraic point of view of perfect MV-algebras, it was shown in \cite{DiLe} that every perfect MV-algebra $M$ can be represented as an interval in the lexicographic product, i.e. $M\cong \Gamma(\mathbb Z \lex G,(1,0))$. This result was extended also for perfect effect algebras \cite{177}.

This notion was generalized in \cite{Dv08} to $n$-perfect pseudo MV-algebras, they can be decomposed into $(n+1)$-comparable slices, and they can be represented in the form $\Gamma(\frac{1}{n}\mathbb Z \lex G,(1,0))$. $\mathbb R$-perfect pseudo MV-algebras can be represented in the form $\Gamma(\mathbb R\lex G,(1,0))$, see \cite{264}, if $G$ is Abelian, such MV-algebras were studied in \cite{DiLe2}. Recently, lexicographic MV-algebras were studied in \cite{DFL}, they have a representation in the form $\Gamma(H\lex G,(u,0))$, where $(H,u)$ is an Abelian linearly ordered group and $G$ is an Abelian $\ell$-group.

Thus we see that   MV-algebras and pseudo MV-algebras that can be represented in the form $\Gamma(H\lex G,(u,0))$ are intensively studied in the last period, see \cite{275} where $H$ was assumed to be Abelian. In this contribution, we continue in this study exhibiting the most general case of $(H,u)$ and $G$ when they are not assumed to be Abelian. We show that the crucial conditions are the existence of a lexicographic ideal, or equivalently, the possibility to decompose $M$ into comparable slices indexed by the elements of the interval $[0,u]_H$; we call such algebras $(H,u)$-perfect. In addition, we present also conditions when $M$ can be represented in the form $\Gamma(H\lex G,(u,b))$, where $b \in G^+$ is not necessarily the zero element.

The paper is organized as follows. The second section gathers the basic notions on pseudo MV-algebras. In the third section we introduce a lexicographic ideal, and we present a representation of a pseudo MV-algebra in the form $\Gamma(H\lex G,(u,0))$. Section 4 gives a categorical equivalence of the category of $(H,u)$-lexicographic pseudo MV-algebras to the category of $\ell$-groups. The final section will describe weakly $(H,u)$-perfect pseudo MV-algebras; they can be represented in the form $\Gamma(H\lex G,(u,b))$, where $b$ can be even strictly positive. Crucial notions for such algebras are a weakly lexicographic ideal as well as a weakly $(H,u)$-perfect pseudo MV-algebra.

\section{Basic Notions on Pseudo MV-algebras}

According to \cite{GeIo}, a {\it pseudo MV-algebra} or a {\it GMV-algebra} by \cite{Rac} is an algebra $(M;
\oplus,^-,^\sim,0,1)$ of type $(2,1,1,$ $0,0)$ such that the
following axioms hold for all $x,y,z \in M$ with an additional
binary operation $\odot$ defined via $$ y \odot x =(x^- \oplus y^-)
^\sim $$
\begin{enumerate}
\item[{\rm (A1)}]  $x \oplus (y \oplus z) = (x \oplus y) \oplus z;$

\item[{\rm (A2)}] $x\oplus 0 = 0 \oplus x = x;$

\item[{\rm (A3)}] $x \oplus 1 = 1 \oplus x = 1;$

\item[{\rm (A4)}] $1^\sim = 0;$ $1^- = 0;$

\item[{\rm (A5)}] $(x^- \oplus y^-)^\sim = (x^\sim \oplus y^\sim)^-;$

\item[{\rm (A6)}] $x \oplus (x^\sim \odot y) = y \oplus (y^\sim
\odot x) = (x \odot y^-) \oplus y = (y \odot x^-) \oplus
x;$\footnote{$\odot$ has a higher binding priority than $\oplus$.}

\item[{\rm (A7)}] $x \odot (x^- \oplus y) = (x \oplus y^\sim)
\odot y;$

\item[{\rm (A8)}] $(x^-)^\sim= x.$
\end{enumerate}

Any pseudo MV-algebra is a distributive lattice where (A6) and (A7) define the joint $x\vee y$ and the meet  $x\wedge y$ of $x,y$, respectively.

A pseudo MV-algebra $M$ is an {\it MV-algebra} if $x\oplus y=y\oplus x$ for all $x,y \in M$.

Orthodox examples of  pseudo MV-algebras are generated by unital $\ell$-groups not necessarily Abelian.

We note that a {\it po-group} (= partially ordered group) is a
group $(G;+,0)$ (written additively) endowed with a partial order $\le$ such that if $a\le b,$ $a,b \in G,$ then $x+a+y \le x+b+y$ for all $x,y \in G.$  We denote by $G^+=\{g \in G: g \ge 0\}$ the {\it positive cone} of $G.$ If, in addition, $G$
is a lattice under $\le$, we call it an $\ell$-group (= lattice
ordered group).  An element $u\in G^+$ is said to be a {\it strong unit}
(= order unit) if $G = \bigcup_n [-nu,nu]$, and the couple $(G,u)$ with a fixed strong unit $u$ is
said to be a {\it unital po-group} or a {\it unital $\ell$-group}, respectively. The {\it commutative center} of a group $H$ is the set $C(H)=\{h\in H: h+h' = h'+h, \ \forall h' \in H\}$. We denote by $[0,u]_H:=\{h \in H: 0\le h \le u\}$ for each $u \in H^+$.

Finally, two unital $\ell$-groups $(G,u)$ and $(H,v)$ are {\it isomorphic} if there is an $\ell$-group isomorphism $\phi:G \to H$ such that $\phi(u)=v$. In a similar way an isomorphism and a homomorphism of unital po-groups are defined.
For more information on po-groups and $\ell$-groups and for unexplained notions about them, see \cite{Dar, Fuc, Gla}.

By $\mathbb R$ and $\mathbb Z$ we denote the groups of reals and natural numbers, respectively.

Between pseudo MV-algebras and unital $\ell$-groups there is a very close connection:
If $u$ is a strong unit of a (not necessarily Abelian)
$\ell$-group $G$,
$$
\Gamma(G,u) := [0,u]
$$
and
\begin{eqnarray*}
x \oplus y &:=&
(x+y) \wedge u,\\
x^- &:=& u - x,\\
x^\sim &:=& -x +u,\\
x\odot y&:= &(x-u+y)\vee 0,
\end{eqnarray*}
then $(\Gamma(G,u);\oplus, ^-,^\sim,0,u)$ is a pseudo MV-algebra.

The basic representation theorem for pseudo MV-algebras is the following generalization \cite{151} of the Mundici famous result:

\begin{theorem}\label{th:2.1}
For any pseudo MV-algebra $(M;\oplus,^-,^\sim,0,1)$,
there exists a unique $($up to iso\-morphism$)$ unital $\ell$-group
$(G,u)$ such that $(M;\oplus,^-,^\sim,0,1)$ is isomorphic to $(\Gamma(G,u);\oplus,^-,^\sim,0,u)$. The
functor $\Gamma$ defines a categorical equivalence of the category
of pseudo MV-algebras with the category of unital $\ell$-groups.
\end{theorem}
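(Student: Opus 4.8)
The plan is to transplant Mundici's good-sequences construction to the non-commutative setting. First I would call a sequence $\mathbf a=(a_1,a_2,\dots)$ of elements of $M$ a \emph{good sequence} if $a_i\oplus a_{i+1}=a_i$ for every $i\ge 1$ and $a_n=0$ for all large enough $n$, and write $M_\Gamma$ for the set of all good sequences. On $M_\Gamma$ I would define an addition extending $\oplus$ by the convolution-type rule
\[
(\mathbf a+\mathbf b)_i \;=\; a_i\oplus(a_{i-1}\odot b_1)\oplus(a_{i-2}\odot b_2)\oplus\cdots\oplus(a_1\odot b_{i-1})\oplus b_i,
\]
using the convention $a_0=1$, so that the last summand is $a_0\odot b_i=b_i$. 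The first, and already non-trivial, block of work is to verify that $\mathbf a+\mathbf b$ is again good, that $+$ is associative with neutral element $\mathbf 0=(0,0,\dots)$, hence that $(M_\Gamma,+,\mathbf 0)$ is a monoid (necessarily non-commutative when $M$ is); the associativity check is a careful induction exploiting the full list of identities (A1)--(A8).

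Next I would prove that $M_\Gamma$ is cancellative on both sides, i.e. $\mathbf a+\mathbf b=\mathbf a+\mathbf c$ implies $\mathbf b=\mathbf c$, and symmetrically; this rests on Riesz-type decomposition properties of $M$ coming from (A6) and (A7). I would order $M_\Gamma$ componentwise, $\mathbf a\le\mathbf b$ iff $a_i\le b_i$ for all $i$, and check that this order is the algebraic one ($\mathbf a\le\mathbf b$ iff $\mathbf a+\mathbf x=\mathbf b$ for some $\mathbf x\in M_\Gamma$, iff $\mathbf y+\mathbf a=\mathbf b$ for some $\mathbf y\in M_\Gamma$, the two conditions being equivalent), that $M_\Gamma$ is a lattice under it, and that $+$ distributes over $\wedge$ and $\vee$ on both sides. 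Thus $M_\Gamma$ is a cancellative, conical, lattice-ordered monoid whose order is the algebraic one.

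The third step is to pass to an $\ell$-group. Because $M_\Gamma$ is cancellative and lattice-ordered with algebraic order, it satisfies the left and right Ore conditions — a common multiple of $\mathbf a,\mathbf b$ is supplied by $\mathbf a\vee\mathbf b$ — and therefore embeds into its group $G$ of two-sided fractions; one then shows that the lattice order of $M_\Gamma$ extends to $G$, making $G$ an $\ell$-group with positive cone $M_\Gamma$, that $u:=(1,0,0,\dots)$ is a strong unit, and that $a\mapsto(a,0,0,\dots)$ is an isomorphism $M\cong\Gamma(G,u)$. For the uniqueness clause I would verify that this construction inverts $\Gamma$: for any unital $\ell$-group $(H,v)$, the good sequences over $\Gamma(H,v)$ correspond bijectively via $(a_1,\dots,a_n)\mapsto a_1+\cdots+a_n$ to the elements of $H^+$, compatibly with $+$, $\le$ and the unit, so the $\ell$-group reconstructed from $\Gamma(H,v)$ is $(H,v)$ itself; consequently $\Gamma(G,u)\cong\Gamma(G',u')$ forces $(G,u)\cong(G',u')$.

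Finally, for the categorical equivalence I would note that every unital $\ell$-group homomorphism restricts to a pseudo MV-algebra homomorphism of the associated intervals and that this assignment is functorial. Faithfulness is immediate; fullness follows by showing that a pseudo MV-homomorphism $h\colon\Gamma(G,u)\to\Gamma(G',u')$ has a unique extension to a unital $\ell$-group homomorphism, namely the one sending a good sequence $(a_1,a_2,\dots)$ to $(h(a_1),h(a_2),\dots)$ and then passing to the groups of fractions. Together with essential surjectivity — which is exactly the existence statement already established — this gives the claimed equivalence. The step I expect to be the main obstacle is the group construction: in the abelian case cancellativity of a commutative monoid hands one the group of differences for nothing, whereas here $M_\Gamma$ is genuinely non-commutative, so one must genuinely verify embeddability into a group and the lattice-compatibility of the ambient order — and it is at exactly this point, together with the associativity computation for $+$, that one is forced to use the full strength of the pseudo MV-algebra axioms rather than only their commutative specialisations.
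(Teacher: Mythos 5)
The paper does not actually prove this theorem: it is quoted from \cite{151}, where the representation is obtained by viewing $M$ as a pseudo effect algebra with the Riesz decomposition property RDP$_2$ and building the universal po-group from the free semigroup on $M$ modulo the relations identifying a word $ab$ with $a+b$ whenever the partial sum $a+b$ is defined; the Riesz decomposition property supplies the normal-form argument that makes the quotient cancellative and naturally ordered (so that the Birkhoff--Fuchs theorem on positive cones applies), and RDP$_2$ then forces the resulting po-group to be lattice ordered. Your programme is instead a direct transplant of Mundici's good sequences, and it breaks at its very first step.

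Concretely, your convolution formula is the Abelian one and is false when the unit is not central. If the construction is to invert $\Gamma$, a good sequence $(a_1,\dots,a_n)$ over $\Gamma(G,u)$ must encode the group element $a_1+\cdots+a_n$, and $\mathbf a+\mathbf b$ must encode the sum of the encoded elements. Take $\mathbf a=(a)$, $\mathbf b=(b)$ with $a+b>u$ in $G$. Your formula returns $(a\oplus b,\,a\odot b)=\bigl(u,\,(a-u+b)\vee 0\bigr)$, whose group sum is $u+a-u+b$, and this differs from $a+b$ unless $u$ commutes with $a$. The correct second entry is $(-u+a+b)\vee 0=(a^\sim)^\sim\odot b$, i.e.\ a conjugate of $a$ by $u$ must be inserted; for longer sequences each term $a_{i-j}\odot b_j$ needs a conjugation by a multiple of $u$, and precisely these corrections destroy the clean induction behind the two computations you defer --- associativity of $+$ and two-sided cancellativity. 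A second unexamined point is your claim that the componentwise order coincides with both the left and the right algebraic order on $M_\Gamma$: this is automatic in the commutative case and genuinely needs proof here. The downstream steps (Ore condition from directedness, embedding of a cancellative naturally ordered semigroup into a po-group, uniqueness, and the categorical equivalence) are sound once a cancellative lattice-ordered monoid with algebraic order is in hand, but that monoid has not been constructed; this is exactly why the published proof routes through pseudo effect algebras and RDP rather than through good sequences.
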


We recall that in the category of pseudo MV-algebras objects are pseudo MV-algebras, and morphisms are homomorphisms of pseudo MV-algebras, whereas objects in the category of unital $\ell$-groups are unital $\ell$-groups $(G,u)$, and morphisms are homomorphisms of $\ell$-groups preserving fixed strong units.

We note that the class of pseudo MV-algebras is a variety whereas the class of unital $\ell$-groups is not a variety because it is not closed under infinite products.

Due to this result, if $M=\Gamma(G,u)$ for some unital $\ell$-group $(G,u)$, then $M$ is linearly ordered iff $G$ is a linearly ordered group, see \cite[Thm 5.3]{156}.

Besides a total operation $\oplus$, we can define a partial operation $+$ on any pseudo MV-algebra $M$  in such a way that $x + y$ is defined iff $x \odot y=0$ and then we set
$$
x+y := x\oplus y. \eqno(2.1)
$$
In other words, $x+y$ is precisely the group addition $x+y$ if the group sum $x+y$ is defined in $M$.

Let $A,B$ be two subsets of $M$. We define (i) $A\leqslant B$ if $a\le b$ for all $a\in A$ and all $b \in B$, (ii) $A\oplus B=\{a\oplus b: a\in A, b \in B\}$, and (iii) $A + B = \{a+b: $ if $a+b$ exists in $M$ for $a\in A,\ b \in B\}$, where the partial $+$ is defined by (2.1). We say that $A+B$ is {\it defined} in $M$ if $a+b$ exists in $M$  for each $a \in A$ and each $b \in B$. (iv) $A^-=\{a^-: a \in A\}$ and $A^\sim = \{a^\sim: a \in A\}.$

Using Theorem \ref{th:2.1}, we have  if $y\le x$, then $x\odot y^-=x-y$ and $y^\sim \odot x = -y+x$, where the subtraction $-$ is in fact the group subtraction in the representing unital $\ell$-group.

Given an element $x$ and any integer $n\ge 0$, we define
$$
0x := 0,\quad 1 x :=x, \quad (n+1)x:= (nx)+ x,
$$
if $nx$ and $(nx)+x$ are defined in $M$, where the simple $+$ is defined by (2.1). An element $x$ of $M$ is (i) {\it infinitesimal} if $nx$ is defined in $M$ for each integer $n \ge 1$, (ii) {\it co-infinitesimal} if $x^-$ is an infinitesimal. We denote by $\Infinit(M)$ the set of infinitesimal elements of $M$.

We recall that if $H$ and $G$ are two po-groups, then the {\it lexicographic product} $H \lex G$ is the group $H\times G$ which is endowed with the lexicographic order: $(h,g)\le (h_1,g_1)$ iff $h< h_1$ or $h=h_1$ and $g\le g_1$. The lexicographic product $H \lex G$ with non-trivial $G$ is an $\ell$-group iff $H$ is linearly ordered group and $G$ is an arbitrary $\ell$-group, \cite[(d) p. 26]{Fuc}. If $G=O$, the trivial group, then $H\lex O$ is an $\ell$-group that is isomorphic to $H$ for every $\ell$-group $H$ (not necessarily linearly ordered). If $u$ is a strong unit for $H$, then $(u,0)$ is a strong unit for $H\lex G$, and $\Gamma(H\lex G,(u,0))$ is a pseudo MV-algebra.

We say that a pseudo MV-algebra $M$ is {\it symmetric} if $x^-=x^\sim$ for all $x \in M$. A pseudo MV-algebra $\Gamma(G,u)$ is symmetric iff $u \in C(G)$, and the variety of symmetric pseudo MV-algebras is a proper subvariety of the variety of pseudo MV-algebras $\mathcal {PMV}$. For example, $\Gamma(\mathbb R\lex G,(1,0))$ is symmetric and it is an MV-algebra iff $G$ is Abelian.

An {\it ideal} of a pseudo MV-algebra $M$ is any non-empty subset $I$ of $M$ such that (i) $a\le b \in I$ implies $a \in I,$ and (ii) if $a,b \in I,$ then $a\oplus b \in I.$
An ideal $I$ is said to be (i) {\it maximal} if $I\ne M$ and it is not a proper subset of another ideal $J \ne M;$ we denote by $\mathcal M(M)$ the set of maximal ideals of $M$, (ii) {\it prime} if $x\wedge y \in I$ implies $x \in I$ or $y \in I$, and (iii) {\it normal} if $\{x\}\oplus I=I\oplus \{x\}$ for any $x \in M$.

If $I$ is a subset of $M$, then $\langle I\rangle$ denotes the least subalgebra of $M$ generated by $I$. If $I$ is an ideal of $M$ such that $I^-=I^\sim$, then it is easy to see that $\langle I \rangle = I\cup I^-=I\cup I^\sim$.

There is a one-to-one correspondence between normal ideals and congruences for pseudo MV-algebras, \cite[Thm 3.8]{GeIo}. The quotient pseudo MV-algebra over a normal ideal $I,$  $M/I,$ is defined as the set of all elements of the form $x/I := \{y \in M :
x\odot y^- \oplus y\odot x^- \in I\},$ or equivalently, $x/I := \{y \in M : x^\sim \odot y \oplus y^\sim \odot x \in I\}.$

The notion of a state is an analogue of a probability measure for pseudo MV-algebras. We say that a mapping $s$ from a pseudo MV-algebra $M$ into the real interval $[0,1]$ is a {\it state} if (i) $s(a+b)=s(a)+s(b)$ whenever $a+b$ is defined in $M$, and (ii) $s(1)=1$. We define the {\it kernel} of $s$ as the set  $\Ker(s)=\{a \in M: s(a)=0\}$. Then $\Ker(s)$ is a normal ideal of $M$.

Pseudo MV-algebras can be exhibited also in the realm of pseudo effect algebras with a special type of the Riesz Decomposition Property which are a non-commutative generalization of effect algebras introduced by \cite{FoBe}.

According to \cite{DvVe1, DvVe2}, a partial algebraic structure
$(E;+,0,1),$  where $+$ is a partial binary operation and 0 and 1 are constants, is called a {\it pseudo effect algebra} if, for all $a,b,c \in E,$ the following hold:
\begin{itemize}
\item[{\rm (PE1)}] $ a+ b$ and $(a+ b)+ c $ exist if and only if $b+ c$ and $a+( b+ c) $ exist, and in this case,
$(a+ b)+ c =a +( b+ c)$;

\item[{\rm (PE2)}] there are exactly one  $d\in E $ and exactly one $e\in E$ such
that $a+ d=e + a=1$;

\item[{\rm (PE3)}] if $ a+ b$ exists, there are elements $d, e\in E$ such that
$a+ b=d+ a=b+ e$;

\item[{\rm (PE4)}] if $ a+ 1$ or $ 1+ a$ exists,  then $a=0.$
\end{itemize}

If we define $a \le b$ if and only if there exists an element $c\in
E$ such that $a+c =b,$ then $\le$ is a partial ordering on $E$ such
that $0 \le a \le 1$ for any $a \in E.$ It is possible to show that
$a \le b$ if and only if $b = a+c = d+a$ for some $c,d \in E$. We
write $c = a \minusre b$ and $d = b \minusli a.$ Then

$$ (b \minusli a) + a = a + (a \minusre b) = b,
$$
and we write $a^- = 1 \minusli a$ and $a^\sim = a\minusre 1$ for any
$a \in E.$

If $(G,u)$ is a unital po-group, then $(\Gamma(G,u);+,0,u),$ where
the set $\Gamma(G,u):=\{g\in G: 0\le g \le u\}$ is endowed with the restriction of the group addition $+$ to $\Gamma(G,u)$ and with $0$ and $u$ as $0$ and $1$, is a pseudo effect algebra. Due to \cite{DvVe1, DvVe2}, if a pseudo effect algebra satisfies a special type of the Riesz Decomposition Property, RDP$_1$, then every pseudo effect algebra is an interval in some unique (up to isomorphism of unital po-groups) $(G,u)$ satisfying also RDP$_1$ such that $M \cong \Gamma(G,u)$.

We say that a mapping $f$ from one pseudo effect algebra $E$ onto a second one $F$ is a {\it homomorphism} (of pseudo effect algebras) if (i) $a,b\in E$ such that $a+b$ is defined in $E$, then $f(a)+f(b)$ is defined in $F$ and $f(a+b)=f(a)+f(b)$, and (ii) $f(1)=1$. Clearly, every homomorphism of pseudo effect algebras preserves $^-$ and $^\sim$. A bijective mapping $h:E\to F$ is an {\it isomorphism} if both $h$ and $h^{-1}$ are homomorphisms of pseudo effect algebras.

We say that a pseudo effect algebra $E$ satisfies RDP$_2$ property if $a_1+a_2=b_1+b_2$ implies that there are four elements $c_{11},c_{12},c_{21},c_{22}\in E$ such that (i) $a_1 = c_{11}+c_{12},$ $a_2= c_{21}+c_{22},$ $b_1= c_{11} + c_{21}$ and $b_2= c_{12}+c_{22}$, and (ii) $c_{12}\wedge c_{21}=0.$

In \cite[Thm 8.3, 8.4]{DvVe2}, it was proved that if $(M; \oplus,^-,^\sim,0,1)$ is a  pseudo MV-algebra, then $(M;+,0,1),$ where $+$ is defined by (2.1), is a pseudo effect algebra with RDP$_2.$  Conversely, if $(E; +,0,1)$ is a pseudo effect algebra with RDP$_2,$ then $E$ is a lattice, and by \cite[Thm 8.8]{DvVe2}, $(E; \oplus,^-,^\sim,0,1),$ where
$$
a\oplus b := (b^-\minusli (a\wedge b^-))^\sim,\quad a,b\in E, \eqno(2.2)
$$
is a pseudo MV-algebra.

\section{Lexicographic Ideals and Lexicographic Pseudo MV-algebras}

We say that a pseudo MV-algebra $M$ is {\it lexicographic} if there are a linearly ordered unital group $(H,u)$ and an $\ell$-group $G$ (both groups are not necessarily Abelian) such that $M \cong \Gamma(H\lex G,(u,0))$.  The main aim of this section is to show conditions when a pseudo MV-algebra is lexicographic. We show that such conditions are closely connected with the existence of a lexicographic ideal.

As a matter of interest, if $O$ is the zero group, then $\Gamma(O\lex G, (0,0))$ is a one-element pseudo MV-algebra. The pseudo MV-algebra $\Gamma(\mathbb Z\lex O,(1,0))$ is a two-element Boolean algebra. 

A normal ideal $I$ of a pseudo MV-algebra $M$ is said to be {\it retractive} if the canonical projection $\pi_I: M  \to M/I$ is retractive, i.e. there is a homomorphism $\delta_I: M/I \to M$ such that $\pi_I\circ \delta_I=id_{M/I}$. If a normal ideal $I$ is retractive, then $\delta_I$ is injective and $M/I$ is isomorphic to a subalgebra of $M$.

For example, if $M=\Gamma(H\lex G,(u,0))$ and $I=\{(0,g): g \in G^+\}$, then $I$ is a normal ideal, and due to  $M/I \cong \Gamma(H,u) \cong \Gamma(H\lex \{0\},(u,0)) \subseteq \Gamma(H\lex G,(u,0))$. If we set $\delta_I((h,g)/I):=(h,0)$, we see that $I$ is retractive.

We say that a normal ideal $I$ of a pseudo MV-algebra $M$ is {\it strict} if $x/I < y/I$ implies $x<y$.

\begin{definition}\label{de:lexid}
{\rm A normal ideal $I$ of a pseudo MV-algebra $M=\Gamma(G,u)$, $\{0\}\ne I \ne M$, is said to be {\it lexicographic} if
\begin{enumerate}
\item[{\rm (i)}] $I$ is strict;
\item[{\rm (ii)}] $I$ is retractive;
\item[{\rm (iii)}] $I$ is prime;
\item[{\rm (iv)}] for each $s,t \in [0,u]_H$, where $\Gamma(H,u):=M/I$, such that $s+t\le u$ and for each $x\in \pi_I^{-1}(\{s\})$ and $y \in \pi^{-1}_I(\{t\})$, we have $x+y-\delta_I(s+t)= (x-\delta_I(s))+(y-\delta_I(t))$, where $+$ and $-$ are counted in the group $G$,
\item[{\rm (v)}] for each $t \in [0,u]_H$ and each $x\in \pi^{-1}_I(\{t\})$, we have $x-\delta_I(t)=-\delta_I(t)+x$, where $+$ and $-$ are counted in the group $G$.
\end{enumerate}
}
\end{definition}

We note that if $M$ is an MV-algebra, then the concept of a lexicographic ideal coincides with a lexicographic ideal defined in \cite{DFL}, in addition, in such a case conditions (iv) and (v) are superfluous.

\begin{proposition}\label{pr:loc6}
Let $(H,u)$ be a linearly ordered unital group and let $G$ be an $\ell$-group. If we set $I=\{(0,g): g \in G^+\}$, then $I$ is a lexicographic ideal of $M=\Gamma(H\lex G,(u,0))$.
\end{proposition}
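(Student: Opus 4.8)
The plan is to verify each of the five defining conditions of a lexicographic ideal directly in the concrete model $M=\Gamma(H\lex G,(u,0))$, using the already-established fact (noted in the excerpt) that $I=\{(0,g):g\in G^+\}$ is a normal ideal with $M/I\cong\Gamma(H,u)$, retraction $\delta_I((h,g)/I)=(h,0)$, and projection $\pi_I((h,g))=h$ (identifying $M/I$ with $\Gamma(H,u)$). First I would record the explicit description of the fibres: for $s\in[0,u]_H$ we have $\pi_I^{-1}(\{s\})=\{(s,g):(s,g)\in\Gamma(H\lex G,(u,0))\}$, which is $\{s\}\times G^+$ when $0<s<u$, is $\{0\}\times G^+$ when $s=0$, and is $\{u\}\times(-G^+)=\{(u,-g):g\in G^+\}$ when $s=u$; in all cases $\delta_I(s)=(s,0)$ lies in the fibre.

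Next I would dispatch the order-theoretic conditions. \emph{Strictness (i):} if $x/I<y/I$ then $\pi_I(x)<\pi_I(y)$ in $H$, i.e.\ the first coordinates satisfy $h_x<h_y$, and by the definition of the lexicographic order on $H\lex G$ this forces $x<y$. \emph{Primeness (iii):} since $H$ is linearly ordered, $M/I\cong\Gamma(H,u)$ is a chain, hence a linearly ordered pseudo MV-algebra has $\{0\}$ prime, and the preimage of a prime ideal under a surjective homomorphism is prime; so $x\wedge y\in I$ gives $\pi_I(x)\wedge\pi_I(y)=0$ in $\Gamma(H,u)$, whence $\pi_I(x)=0$ or $\pi_I(y)=0$, i.e.\ $x\in I$ or $y\in I$. \emph{Retractivity (ii)} is exactly the computation already performed in the excerpt, so I would just cite it.

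The remaining work is conditions (iv) and (v), and these are pure coordinate computations in the group $G$. For (v): take $t\in[0,u]_H$ and $x=(t,g)\in\pi_I^{-1}(\{t\})$; then $\delta_I(t)=(t,0)$, and in the group $H\lex G$ we have $x-\delta_I(t)=(t,g)-(t,0)=(0,g)=(t,0)^{-1}\!\cdot\!(t,g)=-\delta_I(t)+x$, since the first coordinates cancel to $0$ from either side — so (v) holds with both sides equal to $(0,g)$. For (iv): take $s,t\in[0,u]_H$ with $s+t\le u$, and $x=(s,g)$, $y=(t,h)$ in the respective fibres; then $s+t\in[0,u]_H$, $\delta_I(s+t)=(s+t,0)$, and a direct computation gives $x+y-\delta_I(s+t)=(s,g)+(t,h)-(s+t,0)=(s+t,\,g+h)-(s+t,0)=(0,g+h)$, while $(x-\delta_I(s))+(y-\delta_I(t))=(0,g)+(0,h)=(0,g+h)$; the two agree. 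The only subtlety to check here is that $x+y$ actually equals the group sum $(s+t,g+h)$, i.e.\ that the partial operation $+$ of Section~2 is defined on $x,y$ and coincides with addition in $H\lex G$; this follows because $x\odot y^-=0$ precisely when the group sum $x+y$ stays in $[0,(u,0)]$, and the hypothesis $s+t\le u$ (together with the fibre descriptions, handling the boundary cases $s=u$ or $t=u$, where one of $g,h$ is forced to be $\le 0$) guarantees this.

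The main obstacle, such as it is, is bookkeeping at the boundary: when $s$ or $t$ equals $0$ or $u$ the fibre lies in $\{0\}\times G^+$ or $\{u\}\times(-G^+)$ rather than $\{s\}\times G^+$, so the claim ``$x+y$ is the group sum and lies in $\Gamma(H\lex G,(u,0))$'' in condition (iv) needs the case $s+t\le u$ to be examined separately when $s+t=u$ or when one summand already sits at the top. Once one observes that $s+t\le u$ with $s,t\ge 0$ in the \emph{linearly ordered} group $H$ leaves only finitely many boundary configurations, each is immediate, and the interior case $0<s,t,s+t<u$ is the transparent computation above.
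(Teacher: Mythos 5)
Your proposal is correct and follows essentially the same route as the paper: primeness and strictness from the linear order on $M/I\cong\Gamma(H,u)$, retractivity via the embedded copy $\Gamma(H\lex\{0\},(u,0))$, and conditions (iv) and (v) by the coordinate computations $(s+t,g+h)-(s+t,0)=(0,g)+(0,h)$ and $(h,g)-(h,0)=(0,g)=-(h,0)+(h,g)$. One small correction: your closing worry about whether $x+y$ is a \emph{defined} partial sum is moot, since condition (iv) of Definition \ref{de:lexid} explicitly takes $+$ and $-$ in the ambient group, so the identity is a pure group identity; moreover your claim that $s+t\le u$ forces the partial sum to be defined is actually false (e.g.\ $s+t=u$ with both second coordinates strictly positive), but this does not affect the verification.
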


\begin{proof}
It is clear that $I$ is a normal ideal of $M$ as well as it is prime because $M/I \cong\Gamma(H,u)$ and the latter pseudo MV-algebra is linearly ordered.

We have $x/I =0/I$ iff $x \in I$.
Assume $(0,g)/I < (h,g')/I$. Then $(h,g) \notin I$ that yields $h>0$ and $(0,g)<(h,g')$. Hence, if $x/I <y/I$, then $(y-x)/I > 0/I$ and $y-x>0$ and $x<y$.

Since $M/I\cong\Gamma(H\lex \{0\},(u,0)) \subseteq \Gamma(H\lex G,(u,0))$, we see that $I$ is retractive. We have $I^-=I^\sim$, so that $\langle I \rangle = I \cup I^-$.


Let $h_1,h_2 \in [0,u]_H$ and $g_1,g_2\in G$ be such elements that $(h_1,g_1),(h_2,g_2)\in \Gamma(H\lex G,(u,0))$. Then $(h_i,g_i)\in \pi_I^{-1}(\{(h_i,0)/I\})$ and $\delta_i((h_i,0)/I)=(h_i,0)$, $i=1,2$. Hence, $(h_1+h_2,g_1+g_2)-(h_1+h_2,0)=(0,g_1)+(0,g_2)=((h_1,g_1)-(h_1,0))+ ((h_2,g_2)-(h_2,0))$ which proves that $I$ is a lexicographic ideal.

Finally, $(h,g)-(h,0)=(0,g)=-(h,0)+(h,g)$, so that (v) holds.
\end{proof}

Let $\mbox{LexId}(M)$ be the set of lexicographic ideals of $M$. Not every pseudo MV-algebra possesses a lexicographic ideal, e.g. the MV-algebra $M=\Gamma(\mathbb Z \lex \mathbb Z,(2,1))$ is such a case; it has a unique non-trivial ideal $I$, $I=\{(0,n): n \ge 0\}$, $M/I \cong \Gamma(\frac{1}{2}\mathbb Z,1)$ but $M$ does not contain any copy of $\Gamma(\frac{1}{2}\mathbb Z,1)$.  On the other side, it can happen that a pseudo MV-algebra could have more lexicographic ideals as it is in the following example.

\begin{example}\label{ex:3.3}
We define MV-algebras: $M_1=\Gamma(\mathbb Z \lex (\mathbb Z\lex \mathbb Z),(1,(0,0)))$, $M_2 = \Gamma((\mathbb Z \lex \mathbb Z)\lex \mathbb Z,((1,0),0))$, and $M=\Gamma(\mathbb Z \lex \mathbb Z\lex \mathbb Z,(1,0,0))$ which are  mutually isomorphic.
\end{example}

If we take the MV-algebra $M$ from Example \ref{ex:3.3}, we see that $I_1=\{(0,m,n): m > 0, n \in \mathbb Z \mbox{ or } m=0, n\ge 0\}$ and $I_2=\{(0,0,n): n \ge 0\}$ are only two lexicographic ideals of $M$ and $I_2 \subset I_1$.

Similarly as in \cite[Prop 7.1]{275}, we can show that if $I$ and $J$ are lexicographic ideals of $M$, then $I\subseteq J$ or $J\subseteq I$.

\begin{definition}\label{de:3.1}
{\rm Let $(H,u)$ be a linearly ordered group.
We say that a pseudo MV-algebra $M$ is $(H,u)$-{\it perfect}, if there is a system $(M_t: t \in [0,u]_H)$ of nonempty subsets of $M$ such that it is an $(H,u)$-{\it decomposition} of $M,$ i.e. $M_s \cap M_t= \emptyset$ for $s<t,$ $s,t \in [0,u]_ H$ and $\bigcup_{t\in [0,u]_ H}M_t = M$, and \begin{enumerate}

\item[{\rm (a)}]
$M_s \leqslant M_t$ for all $ s<t$, $s,t \in [0,u]_ H$;

\item[{\rm (b)}]
$M_t^- = M_{u-t}$ and $M_t^\sim=M_{-t+u}$ for each $t \in [0,u]_H$;

\item[{\rm (c)}]  if $x \in M_v$ and $y \in M_t$, then
$x\oplus y \in M_{v\oplus t},$ where $v\oplus t=\min\{v+t,u\}.$

\end{enumerate}}
\end{definition}

We note that in view of property (a), we have that if $x \in M_s$, $y \in M_t$, and $s<t$, then $x<y$. From (b), we have $M_0^-=M_u=M_0^\sim$. In addition, if $M$ is symmetric, then in (b) we have $M_t^- = M_{u-t} =M_t^\sim$ for each $t \in [0,u]_H$, and in \cite{275}, there was presented the notion of $(H,u)$-perfect pseudo MV-algebras only for symmetric pseudo MV-algebras.

For example, let us consider
$$
M =\Gamma(H\lex G, (u,0)). \eqno(3.1)
$$
We set $M_0=\{(0,g): g \in G^+\},$ $M_u:=\{(u,-g): g \in G^+\}$ and
for  $t \in [0,u]_ H\setminus \{0,u\},$ we define $M_t:=\{(t,g): g \in G\}.$ Then $(M_t: t \in [0,u]_H)$ is an $(H,u)$-decomposition of $M$ and $M$ is an $(H,u)$-perfect pseudo MV-algebra.

Sometimes we will write also $M=(M_t: t \in [0,u]_ H)$ for an $(H,u)$-perfect pseudo MV-algebra.

\begin{theorem}\label{th:3.2}
Let $M =(M_t: t \in [0,u]_ H)$ be an  $(H,u)$-perfect pseudo MV-algebra.

\begin{enumerate}

\item[{\rm (i)}] Let $a \in M_v,$  $b \in M_t$. If $v+t < u$, then
$ a+b $ is defined in $M$ and $a+b \in M_{v+t}$; if $a+b$ is defined
in $M$, then $v+t\le u$. If $a+b$ is defined in $M$ and $v+t=u$, then $a+b \in M_u$.

\item[{\rm (ii)}]
$M_v + M_t$ is defined in $M$ and $M_v + M_t = M_{v+t}$ whenever $v+t < u$.

\item[{\rm (iii)}] If $a \in M_v$ and $b \in M_t$, and
$v+t > u$, then $a+b$ is not defined in $M.$

\item[{\rm (iv)}] If $a \in M_v$ and $b \in M_t$, then $a\vee b \in M_{v\vee t}$ and $a\wedge b \in M_{v\wedge t}.$

\item[{\rm (v)}]
$M$ admits a state $s$ such that  $M_0 \subseteq \Ker(s)$.

\item[{\rm (vi)}]  $M_0$ is a normal ideal
of $M$ such that $M_0 + M_0 = M_0$ and $M_0\subseteq \Infinit(M).$

\item[{\rm (vii)}] The quotient pseudo MV-algebra $M/M_0\cong \Gamma(H,u).$

\item[{\rm (viii)}]
Let $M = (M'_t: t \in [0,u]_ H)$
be another $(H,u)$-decomposition of $M$ satisfying {\rm (a)--(c)} of Definition {\rm \ref{de:3.1}}, then
$M_t = M_t'$ for each $t \in [0,u]_ H.$

\item[{\rm (ix)}] $M_0$ is a prime ideal of $M$.

\end{enumerate}
\end{theorem}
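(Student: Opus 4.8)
The plan is to establish (i) by descending to the representing unital $\ell$-group of Theorem \ref{th:2.1}, after which (ii)--(vii) and (ix) follow formally; the one genuinely difficult point will be the uniqueness (viii). Write $M=\Gamma(G,u_G)$. From the formulas defining $\Gamma(G,u_G)$ one reads off the elementary fact that, for $a,b\in M$, the sum $a+b$ is defined in $M$ iff $a\le b^-$, equivalently iff $b\le a^\sim$. Now take $a\in M_v$, $b\in M_t$. By (b) we have $a^\sim\in M_{-v+u}$, with $-v+u\in[0,u]_H$, and one checks that $t<-v+u\iff v+t<u$ and $t>-v+u\iff v+t>u$. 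Hence, using (a): if $v+t<u$ then $b<a^\sim$, so $a+b$ is defined, and by (c) $a+b=a\oplus b\in M_{v\oplus t}=M_{v+t}$; if $v+t>u$ then $a^\sim<b$, so $a+b$ is undefined, which is exactly (iii); and if $a+b$ is defined then $v+t\le u$ by contraposition, and $a+b=a\oplus b\in M_u$ when $v+t=u$. Part (ii) follows: the inclusion $M_v+M_t\subseteq M_{v+t}$ is immediate from (i), and conversely, given $c\in M_{v+t}$ with $v+t<u$ (the cases $v=0$ or $t=0$ being trivial), choose any $a\in M_v$, observe $a\le c$ by (a), put $b:=-a+c$, verify $b\in[0,u]$, and apply (i) to $a+b=c$ to conclude $b\in M_t$.

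For (iv), when $v\neq t$ the elements $a,b$ are already comparable by (a), so it suffices to check that each block $M_v$ is closed under $\vee$ and $\wedge$. Writing $a\vee b=a\oplus(a^\sim\odot b)$ and computing with (b), (c) and $(x^\sim)^-=x$ shows $a^\sim\odot b=(b^-\oplus a)^\sim\in M_u^\sim=M_0$, whence $a\vee b\in M_{v\oplus 0}=M_v$; the $\wedge$ case then follows from $(a\wedge b)^-=a^-\vee b^-$ and $(x^-)^\sim=x$. The remaining parts all rest on the observation that the index map $h\colon M\to\Gamma(H,u)$, $h(x)=t$ for $x\in M_t$, is a surjective homomorphism of pseudo MV-algebras: surjectivity is nonemptiness of the blocks, $h(0)=0$ and $h(1)=u$ because $M_0$ and $M_u$ are the smallest and largest blocks, $h(x^-)=h(x)^-$ and $h(x^\sim)=h(x)^\sim$ are (b), and $h(x\oplus y)=h(x)\oplus h(y)$ is (c) together with the fact that $v\oplus t=(v+t)\wedge u$ is the operation $\oplus$ of $\Gamma(H,u)$. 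Then $\Ker h=M_0$ is a normal ideal, so $M/M_0\cong\Gamma(H,u)$ by the isomorphism theorem for pseudo MV-algebras, which is (vii); since $\Gamma(H,u)$ is linearly ordered, $M_0$ is prime, which is (ix); and for $x\in M_0$ one has $x\le x^-$, so $2x$ is defined and lies in $M_{0\oplus 0}=M_0$, hence $nx\in M_0$ for all $n$, giving $M_0\subseteq\Infinit(M)$ and $M_0+M_0=M_0$, which is (vi). Finally $h$ is additive with $h(1)=1$, so (v) is obtained by postcomposing $h$ with a state on $\Gamma(H,u)$: such a state exists because the largest convex subgroup of the linearly ordered group $H$ not containing $u$ is normal, the resulting quotient is Archimedean, and H\"older's theorem embeds it into $\mathbb{R}$ carrying $u$ to $1$; the resulting state on $M$ annihilates $M_0$.

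It remains to prove the uniqueness (viii), which I expect to be the main obstacle. A second decomposition $(M'_t)$ yields, as above, a surjective homomorphism $h'\colon M\to\Gamma(H,u)$ with $\Ker h'=M'_0$, and both $M_0$ and $M'_0$ are strict, since $h(x)<h(y)$ forces $x\in M_v$, $y\in M_w$ with $v<w$, hence $x<y$ by (a). The plan is first to prove $M_0=M'_0$: strictness implies that $M_0$ and $M'_0$ are comparable, and if, say, $M'_0\subsetneq M_0$, then $h$ factors as $h=\bar h\circ h'$ with $\bar h$ a strict surjective endomorphism of $\Gamma(H,u)$ possessing a nontrivial kernel, a situation that must be excluded. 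Once $M_0=M'_0$, one has $h=\theta\circ h'$ for an automorphism $\theta$ of $\Gamma(H,u)$, and comparing the fibres of $h$ and $h'$ while invoking strictness once more should force $\theta=\mathrm{id}$, i.e.\ $M_t=M'_t$ for all $t$. The difficulty, in contrast to parts (i)--(vii) and (ix) which reduce transparently to (a)--(c) and the representation, is precisely this canonicity: showing that the fibration of $M$ into comparable blocks indexed by $[0,u]_H$ is unique requires genuine control of the ideal $M_0$ and of the endomorphisms of $\Gamma(H,u)$.
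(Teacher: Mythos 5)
Your treatment of (i)--(vii) and (ix) is correct and complete: the reduction of (i) and (iii) to the criterion ``$a+b$ is defined iff $b\le a^\sim$'' together with $t<-v+u\iff v+t<u$ works; your (ii) is essentially the paper's own argument (subtract $a\in M_v$ from $z\in M_{v+t}$ and use disjointness of the blocks); your (iv) splits into the comparable case $v\ne t$ and the computation $a^\sim\odot b=(b^-\oplus a)^\sim\in M_0$, a mild variant of the paper's single computation $a\wedge b=(a\oplus b^\sim)-b^\sim$; and the index map $h\colon M\to\Gamma(H,u)$, the isomorphism theorem, and the H\"older/value argument for the state are exactly the right tools (the paper itself proves only (ii) and (iv) here and defers everything else to the earlier reference \cite{275}).

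The genuine gap is (viii), and you have correctly located it but not closed it. Your argument for comparability of $M_0$ and $M_0'$ is fine (it needs only (a) and disjointness, not strictness), but the two steps that would finish the proof are merely asserted: that a strict surjective endomorphism of $\Gamma(H,u)$ with nontrivial kernel ``must be excluded,'' and that the resulting automorphism $\theta$ ``should'' be the identity. Neither follows from anything you have established, and the second step in fact cannot be carried out for an arbitrary linearly ordered unital group: note first that strictness is vacuous for maps between linearly ordered sets, so it gives no leverage on $\bar h$ or $\theta$; and $(H,u)$ may admit nontrivial automorphisms of unital linearly ordered groups --- for $(H,u)=(\mathbb R\lex\mathbb R,(1,0))$ the map $\theta(a,b)=(a,2b)$ is one --- and then $M_t':=M_{\theta^{-1}(t)}$ is another indexed family satisfying (a)--(c) of Definition \ref{de:3.1} with $M_t'\ne M_t$ for suitable $t$. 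So any proof of (viii) must use more than ``$\Ker h=\Ker h'$ plus an automorphism of $\Gamma(H,u)$''; at the very least the uniqueness has to be extracted from a finer intrinsic characterisation of the blocks (or the statement read as uniqueness of the unordered partition), and your outline does not supply that. Since the paper sends the reader to \cite{275} for precisely this item, you should either import and verify that argument or supply an honest replacement; as written, (viii) is unproven.
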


\begin{proof}
It is similar to the proof of \cite[Thm 3.2]{275} where it was assumed that $M$ is symmetric, and therefore, we prove here only some items of them.

(ii) By (i), we have  $M_v +M_t \subseteq M_{v+t}$.
Suppose $z \in M_{v+t}$.  Then, for any $a\in M_v,$ we have $a \le z$.
Hence, $b =- a+z=a\rd z=a^\sim \odot z$ is defined in $M$ and $b \in M_w$ for
some $w \in [0,u]_ H.$ Since $z = a+b \in M_{v+t}\cap M_{v+w}$,  we conclude $t=w$ and $M_{v+t} \subseteq M_v + M_t.$

(iv) Inasmuch as $a\wedge b = (a\oplus b^\sim)-b^\sim$, we have by (c) of Definition \ref{de:3.1}, $(a\oplus b^\sim)-b^\sim \in M_s$, where $s =((v-t+u)\wedge u) -(-t+u)= v\wedge t.$ Using a de Morgan law, we have $a\vee b\in M_{v \vee t}.$
\end{proof}

We note that a $(\mathbb Z,1)$-perfect pseudo MV-algebra is in \cite{DDT} called {\it perfect}. Equivalently, a symmetric pseudo MV-algebra $M$ is perfect iff every element $x$ of $M$ is either infinitesimal or co-infinitesimal.

In addition, a $(\frac{1}{n}\mathbb Z,1)$-perfect pseudo MV-algebra is said to be $n$-perfect, see \cite{Dv08}.

Now we present the main result of this section, a representation of a pseudo MV-algebra with a lexicographic ideal as a lexicographic pseudo MV-algebra.

For each $\ell$-group $G$ and each unital linearly ordered group $(H,u)$, we define the pseudo MV-algebra
$$
\mathcal M_{H,u}(G):=\Gamma(H \lex G,(u,0)). \eqno(3.2)
$$

\begin{theorem}\label{th:local1}
Let $M$ be a pseudo MV-algebra and let $I$ be a lexicographic ideal of $M$. Then there are a linearly ordered unital group $(H,u)$ such that $E/I \cong \Gamma(H,u)$ and an $\ell$-group $G$ with $\langle I\rangle \cong \Gamma(\mathbb Z\lex G,(1,0))$ such that $M \cong \Gamma(H\lex G,(u,0))$.

In addition, if there is an $\ell$-group $G'$ such that $M \cong \Gamma(H\lex G',(u,0))$, then $G'$ is isomorphic to $G$.
\end{theorem}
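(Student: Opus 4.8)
The plan is to build the $\ell$-group $G$ from the structure that the lexicographic ideal $I$ imposes on $M$, then verify the isomorphism $M \cong \Gamma(H \lex G,(u,0))$ and finally the uniqueness of $G$.

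\textbf{Setting up the data.} Since $I$ is prime and normal, $M/I$ is linearly ordered, so by Theorem \ref{th:2.1} there is a linearly ordered unital group $(H,u)$ with $M/I \cong \Gamma(H,u)$; I identify $M/I$ with $\Gamma(H,u)$ via this isomorphism and write $\pi = \pi_I : M \to \Gamma(H,u)$ for the quotient map, with section $\delta = \delta_I : \Gamma(H,u) \to M$ given by retractivity. For each $t \in [0,u]_H$ set $M_t := \pi^{-1}(\{t\})$. I first want to show that $(M_t : t \in [0,u]_H)$ is an $(H,u)$-decomposition making $M$ an $(H,u)$-perfect pseudo MV-algebra in the sense of Definition \ref{de:3.1}: properties (a) and (c) are immediate from $\pi$ being a homomorphism together with strictness (condition (i) of a lexicographic ideal), which upgrades the order of $M/I$ to the order of $M$; property (b) follows since $\pi$ preserves $^-$ and $^\sim$. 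Then Theorem \ref{th:3.2} applies: $M_0 = I$, $M/M_0 = M/I \cong \Gamma(H,u)$, and for $s + t < u$ we have $M_s + M_t = M_{s+t}$ with $+$ the partial group addition.

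\textbf{Constructing $G$.} Working inside the representing unital $\ell$-group $(K,w)$ of $M$ (so $M = \Gamma(K,w)$, $w = 1_M$), I define $G$ to be the subgroup of $K$ generated by $\{x - \delta(t) : t \in [0,u]_H,\ x \in M_t\}$. Conditions (iv) and (v) of Definition \ref{de:lexid} are exactly what is needed to make this work: (iv) says $x + y - \delta(s+t) = (x-\delta(s)) + (y-\delta(t))$ whenever $s+t \le u$, which shows that the set $G_0 := \{x - \delta(t) : t \in [0,u]_H,\ x \in M_t\}$ is closed under the partial addition coming from $M$ and that subtracting $\delta(t)$ is "additive" across slices; (v) says $x - \delta(t) = -\delta(t) + x$, i.e. the translates $x - \delta(t)$ lie in a part of $K$ where the relevant elements commute with the $\delta(t)$'s, which is what is needed for the lexicographic product (with $H$ possibly non-abelian) to be well-defined — recall $H \lex G$ is an $\ell$-group precisely when $H$ is linearly ordered. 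I then show $G$ inherits a lattice order from $K$ (the order on $G_0$ being induced by that of $M$, with $x - \delta(t) \le y - \delta(t)$ iff $x \le y$ for $x,y \in M_t$, and comparing across slices reduces to the $t$-component being equal by strictness) and that $\langle I \rangle = I \cup I^- \cong \Gamma(\mathbb Z \lex G,(1,0))$; the latter is the "$(\mathbb Z,1)$-perfect" / perfect case, recovering the classical lexicographic representation applied to the subalgebra $\langle I\rangle$, whose only nontrivial slices are $M_0 = I$ and $M_u = I^-$.

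\textbf{The isomorphism and uniqueness.} I define $\Phi : M \to \Gamma(H \lex G,(u,0))$ by $\Phi(x) = (\pi(x),\ x - \delta(\pi(x)))$. Using Theorem \ref{th:3.2}(i)--(iv) and conditions (iv), (v) of Definition \ref{de:lexid}, one checks $\Phi$ is well-defined (the second coordinate lands in $G^+$ when $\pi(x) \in \{0,u\}$ appropriately and in $G$ otherwise), injective (if $\Phi(x) = \Phi(y)$ then $\pi(x) = \pi(y) = t$ and $x - \delta(t) = y - \delta(t)$, so $x = y$), surjective (given $(t,g)$ with $(t,g) \le (u,0)$, the element $\delta(t) + g$, suitably interpreted, lies in $M_t$ and maps to it), and a homomorphism of pseudo MV-algebras — it suffices to check it preserves the partial operation $+$ and the unary operations, which is where conditions (iv) and (v) do the real work in matching the non-commutative lexicographic addition. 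For uniqueness: if $M \cong \Gamma(H \lex G',(u,0))$, then by Proposition \ref{pr:loc6} the set $\{(0,g) : g \in (G')^+\}$ is a lexicographic ideal, and by the linearity property of lexicographic ideals (stated in the excerpt, analogous to \cite[Prop 7.1]{275}) together with the fact that $I$ corresponds to it — here I invoke that $M/I$ determines $(H,u)$ up to isomorphism and that $\langle I \rangle \cong \Gamma(\mathbb Z \lex G',(1,0))$ as well — one gets $\Gamma(\mathbb Z \lex G,(1,0)) \cong \langle I \rangle \cong \Gamma(\mathbb Z \lex G',(1,0))$, whence $\mathbb Z \lex G \cong \mathbb Z \lex G'$ as unital $\ell$-groups (by Theorem \ref{th:2.1}), and finally $G \cong G'$ since $G$ is recovered from $\mathbb Z \lex G$ as (for instance) the stabilizer/kernel structure over the $\mathbb Z$-component, or more concretely as the $\ell$-group $\{g : (0,g) \in \mathbb Z \lex G\}$ with its induced order.

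\textbf{Main obstacle.} The delicate point is verifying that $\Phi$ is a homomorphism for the total operation $\oplus$ in the non-commutative setting — equivalently, that the partial addition on $M$ transported through $\Phi$ agrees with the lexicographic partial addition on $H \lex G$. When both slice indices $s,t$ satisfy $s + t < u$ this is condition (iv) directly; the genuinely fiddly cases are when $s + t = u$ or $s + t \not\le u$, where one must use that $M_u^- = M_0 = I$ and carefully track the $^-$ and $^\sim$ operations (which differ since $H$, hence $M$, need not be symmetric), leaning on Theorem \ref{th:3.2}(i)--(iv) and condition (v) to relativize everything to $G$. Establishing that $G$ is genuinely an $\ell$-group (closure of $G_0$ under $\vee,\wedge$ inside $K$, and that the generated subgroup is a sublattice) is the other step requiring care, and is where condition (v) — non-commutativity being confined so that $H$ alone carries it — is essential.
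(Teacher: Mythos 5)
Your proposal follows essentially the same route as the paper: identify $M/I$ with $\Gamma(H,u)$, show the fibres $M_t=\pi_I^{-1}(\{t\})$ form an $(H,u)$-decomposition, obtain $G$ from $M_0=I$ (the paper invokes Birkhoff's theorem on the cancellative semigroup $M_0$, which agrees with your subgroup of $K$ generated by the elements $x-\delta_I(t)$), and define the same isomorphism $x\mapsto(t,x-\delta_I(t))$, with conditions (iv) and (v) of Definition \ref{de:lexid} doing exactly the work you assign them. The only minor divergence is in the uniqueness part, where the paper applies Theorem \ref{th:3.2}(viii) directly to the two $(H,u)$-decompositions rather than passing through $\langle I\rangle\cong\Gamma(\mathbb Z\lex G,(1,0))$; both arguments are sound.
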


\begin{proof}
According to a basic representation Theorem \ref{th:2.1}, we can assume that $M=\Gamma(K,v)$ for some unital $\ell$-group $(K,v)$. Since $I$ is lexicographic, then $I$ is normal and prime, so that $M/I$ is a linear pseudo MV-algebra. There is a linearly ordered unital group $(H,u)$ such that $M/I\cong \Gamma(H,u)$; without loss of generality, we assume $M/I=\Gamma(H,u)$.

Let $\pi_I:M\to M/I$ be the canonical projection. For any $t \in [0,u]_H$, we set $M_t:=\pi_I^{-1}(\{t\}$; then $M_0=I$. We assert that $(M_t: t \in [0,u]_H)$ is an $(H,u)$-decomposition of $M$. Indeed, since $\pi_I$ is surjective, every $M_t$ is non-empty. The decomposition $(M_t: t \in [0,u]_H)$ has the following properties: (a) Let $x \in M_s$ and $y \in M_t$ for $s<t,$ $s,t \in [0,u]_H$, then $x<y$. Indeed,  since $\pi_I(x)=s <t<\pi_I(y)$ and $x<y$ because $I$ is strict. (b) $M_t^-=M_{u-t}$ and $M_t^\sim=M_{-t+u}$
for each $t \in [0,u]_H$ which holds because $\pi_I$ is a homomorphism. (c) $M_s\oplus M_t\subseteq M_{s\oplus t}$ for $s,t \in [0,u]_H$, where $s\oplus t:=\min\{s+t,u\}$. Indeed, $x\in M_s$ and $y \in M_t$, then $\pi_I(x\oplus y)=\pi_I(x)\oplus \pi_I(y)=s\oplus t$, so that $M_s\oplus M_t \subseteq M_{s\oplus t}$.

By (vi) of Theorem \ref{th:3.2}, $M_0$ is an associative cancellative semigroup satisfying conditions of Birkhoff's Theorem  \cite[Thm XIV.2.1]{Bir}, \cite[Thm II.4]{Fuc},
which guarantee that $M_0$ is a positive cone of
a unique (up to isomorphism) directed po-group $G$. Since $M_0$ is a lattice, we have that $G$ is an $\ell$-group. In addition, since $\langle I \rangle = I \cup I^-$ is a perfect pseudo MV-algebra, we have by \cite[Prop 5.2]{DDT}, $\langle I\rangle \cong \Gamma(\mathbb Z\lex G,(1,0))$.

For each $t \in [0,u]_H$, we set $c_t=\delta_I(t)$. Then (i) $c_{s+t}=c_s+c_t$ if $s+t\le u$, (ii) $c_0=0$ and $c_u=1$, and (iii) $\{c_t\}=\delta_I(M/I)\cap M_t$, $t\in [0,u]_H$.

Take the $(H,u)$-perfect pseudo MV-algebra $\mathcal M_{H,u}(G)$ defined by (3.2), and define a mapping $\phi: M \to \mathcal M_{H,u}(G)$ by

$$
\phi(x):= (t, x - c_t)\eqno (3.3)
$$
whenever $x \in M_t$ for some $t \in [0,u]_ H,$ where $ x-c_t$ denotes the difference taken in the group $K$.

\vspace{2mm}
{\it Claim 1:} {\it  $\phi$ is a well-defined mapping.}
 \vspace{2mm}

Indeed, $M_0$  is in fact the positive cone of an $\ell$-group $G$ which is a subgroup of $K.$    Let $x \in M_t.$ For the element $x - c_t \in K,$ we define $(x-c_t)^+:= (x-c_t)\vee 0 = (x \vee c_t)-c_t \in M_0$ (when we use (iii) of Theorem \ref{th:3.2}) and similarly $(x -c_t)^- := -((x-c_t)\wedge 0) = c_t - (x\wedge c_t) \in M_0.$ This implies that $x-c_t= (x-c_t)^+ - (x-c_t)^-\in G.$

\vspace{2mm}
{\it Claim 2:} {\it The mapping $\phi$ is an injective and surjective homomorphism of pseudo effect algebras.}

\vspace{2mm}

We have $\phi(0)=(0,0)$ and $\phi(1)=(u,0).$ Let $x \in M_t.$ Using (v) of Definition \ref{de:lexid}, we have $x^- \in M_{u-t},$ $\phi(x)=(t,x-c_t)$ and $\phi(x^-) =(u-t, x^- -  c_{u-t}) = (u-t,-c_{u-t}+x^-)=(u-t,c_t-1+1-x)=(u-t,c_t-x)= (u,0)-(t,x - c_t)=\phi(x)^-.$ In an analogous way, $\phi(x^\sim)=\phi(x)^\sim.$

Now given $x,y \in M$ and let $x+y$ be defined in $M.$ Then $x\in M_{t_1}$ and $y \in M_{t_2}.$ Since $x\le y^-,$ we have $t_1 \le u-t_2$ so that $\phi(x) \le \phi(y^-)=\phi(y)^-$ which means  $\phi(x)+\phi(y)$ is defined in $\mathcal M_{H,u} (G).$ Using (iv) of Definition \ref{de:lexid}, we conclude $\phi(x+y) = (t_1+t_2, x+y - c_{t_1+t_2}) =
(t_1+t_2, x+y -(c_{t_1} + c_{t_2}))= (t_1,x-c_{t_1}) + (t_2,y- c_{t_2})=\phi(x)+\phi(y).$

Assume $\phi(x)\le \phi(y)$ for some $x\in M_{t}$ and $y \in M_v.$ Then $(t,x-c_t)\le (v, y - c_v).$ If $t=v,$ then $x-c_t\le y-c_t$ so that $x\le y.$  If $t<v,$ then $x \in M_t$ and $y\in M_v$ so that $x<y.$  Therefore, $\phi$ is injective.

To prove that $\phi$ is surjective, assume two cases: (i) Take $g \in G^+=M_0.$  Then $\phi(g)=(0,g).$ In addition $g^- \in M_u$ so that $\phi(g^-) = \phi(g)^-= (0,g)^- = (u,0)-(0,g)=(u,-g).$ (ii) Let $g \in G$ and $t$ with $0<t<u$ be given. Then $g = g_1-g_2,$ where $g_1,g_2 \in G^+=M_0.$ Since $c_t \in M_t,$ we have $g_2\le c_t$, so that $-g_2 + c_t=g_2\rd c_t$ exists in $M$ and it belongs to $M_t,$ which yields $g+c_t=g_1+ (- g_2+c_t) \in M_t.$  Hence,  $\phi(g+c_t)=(t,g)$ when we have used the property (iv) of Definition \ref{de:lexid}.

\vspace{2mm}
{\it Claim 3:}  {\it If $x\le y,$ then $\phi(y \minusli x)=\phi(y)\minusli \phi(x)$ and $\phi(x\minusre y)=\phi(x)\minusre \phi(y)$. In particular, $\phi(x^-)=\phi(x)^-$ and $\phi(x^\sim)=\phi(x)^\sim$ for each $x\in M$.  }
\vspace{2mm}

It follows from the fact that $\phi$ is a homomorphism of pseudo effect algebras.

\vspace{2mm}
{\it Claim 4:} $\phi(x\wedge y) = \phi(x)\wedge \phi(y)$ and $\phi(x\vee y)=\phi(x)\vee\phi(y).$
\vspace{2mm}

We have, $\phi(x), \phi(y) \ge \phi(x\wedge y).$ If $\phi(x), \phi(y) \ge \phi(w)$ for some $w \in M,$ we have $x,y \ge w$ and $x\wedge y \ge w.$ In the same way we deal with $\vee.$

\vspace{2mm}
{\it Claim 5:} {\it $\phi$ is a homomorphism of pseudo MV-algebras.}
\vspace{2mm}

It is necessary to show that $\phi(x\oplus y)=\phi(x)\oplus \phi(y).$
This follows straightforwardly from the previous claims and equality (2.2).

Consequently, $M$ is isomorphic to $\mathcal M_{H,u}(G)$ as pseudo MV-algebras.

If $M \cong \Gamma(H\lex G',(u,0))$ for some  $\ell$-group $G'$, let $\psi: \Gamma(H\lex G,(u,0)) \to \Gamma(H\lex G',(u,0))$ be an isomorphism of pseudo MV-algebras. Then $\psi$ induces another $(H,u)$-decomposition $(M_t': t \in [0,u]_H)$ of $\Gamma(H\lex G',(u,0))$, where $M_t'=\psi(M_t)$. By Theorem \ref{th:3.2}(viii), we see that $\psi(\{(0,g): g \in G^+\})= \{(0,g'): g' \in G'^+\}$ which proves that $G$ and $G'$ are isomorphic $\ell$-groups.
\end{proof}

We note that in Example \ref{ex:3.3}, the pseudo MV-algebra has two lexicographic ideals $I_1$ and $I_2$, so that it has two representations as lexicographic pseudo MV-algebras, namely one as $M_1$ and the second as $M_2$. One is $(\mathbb Z,1)$-perfect and the second is $(\mathbb Z\lex \mathbb Z,(1,0))$-perfect, and of course the linear unital groups $(H_1,u_1):=(\mathbb Z,1)$ and $(H_2,u_2):=(\mathbb Z\lex \mathbb Z,(1,0))$ are not isomorphic.

We say that a pseudo MV-algebra $M$ is $I$-{\it representable} if $I$ is a lexicographic ideal of $M$ and $M\cong \Gamma(H\lex G, (u,0))$, where $(H,u)$ is a linearly ordered unital group such that $M/I \cong \Gamma(H,u)$ and $G$ is an $\ell$-group such that $\langle I\rangle \cong \Gamma(\mathbb Z \lex G,(1,0))$;  the existences of $(H,u)$ and $G$ are guaranteed by Theorem \ref{th:local1}. Since the linearly ordered unital group $(H,u)$ is uniquely (up to isomorphism of unital $\ell$-groups) determined by the retractive ideal $I$, we can say also that $M$ is also $(H,u)$-{\it lexicographic}. This notion is well defined because if there is another lexicographic ideal $J$ of $M$ such that $M/J \cong \Gamma(H,u)$, then $(\pi_I^{-1}(\{t\}): t \in [0,u]_H)$ and $(\pi_J^{-1}(\{t\}): t \in [0,u]_H)$ are two $(H,u)$-decompositions of $M$, so that by Theorem \ref{th:3.2}(viii), they are the same, in particular $I=\pi_I^{-1}(\{0\})=\pi_J^{-1}(\{0\})=J$.

Now we define another notion that is very closely connected with lexicographic pseudo MV-algebras.

\begin{definition}\label{de:strong}
{\rm We say that an $(H,u)$-perfect pseudo MV-algebra $M=\Gamma(K,v)$ is {\it strongly} $(H,u)$-{\it perfect} if it is $(H,u)$-perfect, and if there is a system of elements $(c_t: t \in [0,u]_H)$ of $M$ such that
\begin{enumerate}
\item[{\rm (i)}] $c_t \in M_t$ for each $t \in [0,u]_H$;
\item[{\rm (ii)}] $c_{s+t}=c_s+c_t$ if $s+t \le u$;
\item[{\rm (ii)}] $c_u=1$;
\item[{\rm (iv)}] $(x+y)-c_{s+t}=(x-c_s)-(y-c_t)$ if $\in M_s$, $y\in M_t$, $s+t\le u$, where $+$ and $-$ are counted in the $\ell$-group $K$;
\item[{\rm (v)}] for each $t \in [0,u]_H$ and each $x \in M_t$, we have $x-c_t=-c_t+x$, where $+$ and $-$ are counted in the $\ell$-group $K$.
\end{enumerate}
}
\end{definition}

In view of (ii), we have $c_0+c_0=c_0$, so that $c_0=0$.

Now we show that, for any pseudo MV-algebra $M$, $I$-representability and strong $(H,u)$-perfectness are equivalent.

\begin{theorem}\label{th:3.8}
Let $M$ be a pseudo MV-algebra and $(H,u)$ be a linearly ordered group. The following assertions are equivalent:
\begin{itemize}
\item[{\rm (i)}] $M$ is $I$-representable and $M/I=\Gamma(H,u)$.
\item[{\rm (ii)}] $M$ is strongly $(H,u)$-perfect.
\item[{\rm (iii)}] $M$ is $(H,u)$-lexicographic.
\end{itemize}
\end{theorem}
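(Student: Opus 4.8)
The plan is to run the cycle $(i)\Leftrightarrow(iii)$, $(i)\Rightarrow(ii)$, $(ii)\Rightarrow(i)$, using Theorem~\ref{th:local1} to pass from a lexicographic ideal to a concrete lexicographic representation and Theorem~\ref{th:3.2} for the structure of the zero slice $M_0$. The equivalence $(i)\Leftrightarrow(iii)$ is essentially a matter of unwinding definitions: by definition $M$ is $(H,u)$-lexicographic precisely when it is $I$-representable for some lexicographic ideal $I$ with $M/I\cong\Gamma(H,u)$ (and, by the well-definedness remark preceding Definition~\ref{de:strong}, such an $I$ is then unique), while conversely any lexicographic ideal $I$ with $M/I\cong\Gamma(H,u)$ makes $M$ $I$-representable by Theorem~\ref{th:local1}. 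So $(i)$ and $(iii)$ say the same thing, up to the harmless identification $M/I=\Gamma(H,u)$.

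For $(i)\Rightarrow(ii)$ I would invoke Theorem~\ref{th:2.1} to assume $M=\Gamma(H\lex G,(u,0))$ with $M/I\cong\Gamma(H,u)$, and equip it with the $(H,u)$-decomposition of the example following Definition~\ref{de:3.1}, namely $M_0=\{(0,g):g\in G^+\}$, $M_u=\{(u,-g):g\in G^+\}$, $M_t=\{(t,g):g\in G\}$ for $0<t<u$, together with the elements $c_t:=(t,0)$. Then conditions (i)--(v) of Definition~\ref{de:strong} reduce to elementary componentwise identities in the group $H\lex G$: $c_t\in M_t$, $c_{s+t}=(s,0)+(t,0)=c_s+c_t$ whenever $s+t\le u$, $c_u=(u,0)=1$, $(x+y)-c_{s+t}=(0,g_1+g_2)=(x-c_s)+(y-c_t)$ for $x=(s,g_1)\in M_s$ and $y=(t,g_2)\in M_t$ with $s+t\le u$, and $x-c_t=(0,g)=-c_t+x$ for $x\in M_t$. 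Hence $M$ is strongly $(H,u)$-perfect.

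The substantive direction is $(ii)\Rightarrow(i)$. Let $M=\Gamma(K,v)$ be strongly $(H,u)$-perfect with slices $(M_t)$ and elements $(c_t)$, and put $I:=M_0$. By Theorem~\ref{th:3.2} the set $I$ is a normal ideal (vi), is prime (ix), and satisfies $M/I\cong\Gamma(H,u)$ (vii); strictness of $I$ is the note following Definition~\ref{de:3.1}; and conditions (iv)--(v) of Definition~\ref{de:lexid} are literally conditions (iv)--(v) of Definition~\ref{de:strong}. So only retractivity of $I$ must be checked: I would define $\delta_I\colon M/I\to M$ by $\delta_I(t):=c_t$, note that under the identification $M/I=\Gamma(H,u)$ (for which the class of any $x\in M_t$ is $t$) one has $\pi_I(c_t)=t$, whence $\pi_I\circ\delta_I=\mathrm{id}$, and verify that $\delta_I$ is a homomorphism of pseudo MV-algebras.

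The key preliminary observation is that $c_p+c_q=c_u=v$ whenever $p,q\in H^+$ and $p+q=u$ (by conditions (ii)--(iii) of Definition~\ref{de:strong}, noting that $p,q\in[0,u]_H$ automatically). This gives $\delta_I(0)=c_0=0$, $\delta_I(u)=v$, and — using $(u-t)+t=u=t+(-t+u)$ — $\delta_I(t^-)=c_{u-t}=v-c_t=\delta_I(t)^-$ and $\delta_I(t^\sim)=c_{-t+u}=-c_t+v=\delta_I(t)^\sim$. For $\oplus$ one computes $\delta_I(s)\oplus\delta_I(t)=(c_s+c_t)\wedge v$: if $s+t\le u$ this equals $c_{s+t}\wedge v=c_{s+t}=\delta_I(s\oplus t)$ since $c_{s+t}\in M_{s+t}\subseteq[0,v]$; if $s+t>u$ then $-s+u<t$, hence $c_{-s+u}<c_t$ by monotonicity of the slices, so $v=c_s+c_{-s+u}\le c_s+c_t$ (consistent with Theorem~\ref{th:3.2}(iii)), whence the meet equals $v=c_u=\delta_I(s\oplus t)$. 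Therefore $\delta_I$ is a retraction, $I$ is a lexicographic ideal, and Theorem~\ref{th:local1} applied to $I$ — combined with $M/I\cong\Gamma(H,u)$ — produces an $\ell$-group $G$ with $M\cong\Gamma(H\lex G,(u,0))$ and $\langle I\rangle\cong\Gamma(\mathbb Z\lex G,(1,0))$; that is, $M$ is $I$-representable with $M/I=\Gamma(H,u)$, which is $(i)$. The main obstacle I anticipate is exactly this last verification that $t\mapsto c_t$ respects $\oplus$, $^-$ and $^\sim$ — in particular the $\oplus$-case $s+t>u$ and keeping $^-$ apart from $^\sim$ in the non-commutative setting; everything else is already packaged in Theorems~\ref{th:local1} and~\ref{th:3.2}.
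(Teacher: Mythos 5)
Your proposal is correct and follows essentially the same route as the paper: (i)$\Leftrightarrow$(iii) by unwinding definitions, (i)$\Rightarrow$(ii) by reading off the slices and the elements $c_t=\delta_I(t)$ from the lexicographic representation, and (ii)$\Rightarrow$(i) by setting $I:=M_0$, importing normality, primality, and $M/I\cong\Gamma(H,u)$ from Theorem~\ref{th:3.2}, and checking that $t\mapsto c_t$ is a retraction before invoking Theorem~\ref{th:local1}. The only cosmetic difference is that the paper establishes that $\{c_t\}$ is a subalgebra by showing it is a linearly ordered sub-pseudo-effect-algebra and appealing to (2.2), whereas you verify the preservation of $\oplus$, $^-$, $^\sim$ directly with the case split $s+t\le u$ versus $s+t>u$; both rest on the same identities $c_{s+t}=c_s+c_t$ and the comparability of the slices.
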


\begin{proof}
(i) $\Rightarrow$ (ii). Let $I$ be a lexicographic ideal of $I$. From the proof of Theorem \ref{th:local1}, we see that if we put $M_t:= \pi_I^{-1}(\{t\})$ for each $t\in [0,u]$, then the system $(M_t: t \in [0,u]_H)$ is an $(H,u)$-decomposition of $M$. The system $(c_t:t\in [0,u]_H)$, where $c_t:=\delta_I(t)$ for $t \in [0,u]_H$, proves $M$ is strongly $(H,u)$-perfect.

(ii) $\Rightarrow$ (i). Let $(H,u)$ be a linearly ordered unital group and let $(M_t:t \in [0,u]_H)$ be an $(H,u)$-decomposition and let $(c_t: t \in [0,u]_H)$ be a system of elements of $M$ satisfying conditions (i)--(v) of Definition \ref{de:strong}. By Theorem \ref{th:3.2}(vi),(ix),(vii),  $I:=M_0$ is a normal and prime ideal of $M$ such that $M/I \cong \Gamma(H,u)$; without loss of generality, we assume $M/I=\Gamma(H,u)$. (Indeed, if $\iota_I:M/I \to \Gamma(H,u)$ is an isomorphism, there is a unique isomorphism $\delta'_I:\Gamma(H,u)\to M_I:=\delta_I'([0,u]_H)$ such that $\delta_I=\delta'_I \circ \iota_I$, and then we deal with $\pi_I':=\iota_I\circ \pi_I$, $\delta'_I$ and $id_{[0,u]_H}$ instead of $\pi_I$, $\delta_I$ and $id_{M/I}$, respectively.) In addition, the system $(M_t: t \in [0,u]_H)$ is a unique. Let $\pi_I:M \to M/I$ be the canonical homomorphism. Then $x\sim_I y$ iff there is $t \in [0,u]_H$ such that $x,y \in M_t$. Hence, $\pi_I^{-1}(\{t\})=M_t$ for each $t\in [0,u]_H$. In view of of (a) of Definition \ref{de:3.1}, we see that $I$ is a strict ideal of $M$.

Let $M':=\{c_t: t \in [0,u]_H\}$. In view of Definition \ref{de:strong}, we see that $M'$ is a pseudo effect algebra such that $c_t^-=c_{u-t}$ and $c_t^\sim=c_{-t+u}$ for each $t \in [0,u]_H$. This pseudo effect algebra is linearly ordered via $c_s < c_t$ iff $s< t$ so that $c_{s\wedge t}=c_s\wedge c_t$, where $\wedge$ is taken in the pseudo MV-algebra $M$. In addition, the order taken in $M$ and the one taken in the pseudo effect algebra $M'$ coincide in $M'$, so that $c_{t-s}=c_t - c_s$ and $c_{-s+t}=-c_s +c_t$ if $t>s$. In view of (2.2), we conclude, that $M'$ is a subalgebra of the pseudo MV-algebra $M$, and $M'=M_I$. If we define $\delta_I: \Gamma(H,u) \to M'$ via $\delta_I(t)=c_t$, we see that $\delta_I$ is a homomorphism such that $\pi_I \circ \delta_I=id_{M/I}$ which proves that $I$ is a retractive ideal. In view of (iv) of Definition \ref{de:strong}, we see that $I$ is a lexicographic ideal of $M$.

(i) $\Leftrightarrow$ (iii). It is evident.
\end{proof}

\begin{corollary}\label{co:3.9}
Let $M=(M_t: t \in [0,u]_H)$ be a strongly $(H,u)$-perfect pseudo MV-algebra with a fixed system of elements $(c_t: t \in [0,u]_H)$ satisfying conditions of Definition {\rm \ref{de:strong}}. Then $I:=M_0$ is a lexicographic ideal of $M$ such that $M/I \cong \Gamma(H,u)$, $M'=\{c_t: t \in [0,u]_H\}$ is a subalgebra of $M$ isomorphic to $\Gamma(H,u)$, and the mapping $\delta_I: M/I\to M'$ given by $\delta_I(t)=c_t$, $t \in [0,u]_H$, is an isomorphism such that $\pi_I \circ \delta_I =id_{M/I}$.
\end{corollary}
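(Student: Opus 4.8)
The plan is to read the statement off the equivalence already proved in Theorem~\ref{th:3.8}, since being strongly $(H,u)$-perfect is precisely hypothesis (ii) there, and almost every assertion of the corollary was verified along the way in the implication (ii)$\Rightarrow$(i). So the proof will mostly consist of collecting what that argument already gives and observing that nothing new is needed.

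First I would invoke Theorem~\ref{th:3.2}(vi),(ix),(vii) to get that $I:=M_0$ is a normal and prime ideal with $M/I\cong\Gamma(H,u)$; after the harmless identification $M/I=\Gamma(H,u)$ described in the proof of Theorem~\ref{th:3.8}, one has $\pi_I^{-1}(\{t\})=M_t$ for every $t\in[0,u]_H$, whence strictness of $I$ follows from condition (a) of Definition~\ref{de:3.1}. Retractiveness comes from the system $(c_t)$: set $\delta_I(t):=c_t$; since $c_t\in M_t=\pi_I^{-1}(\{t\})$ we immediately get $\pi_I\circ\delta_I=id_{M/I}$, which also disposes of the last clause of the corollary. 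That $\delta_I$ is a homomorphism (so that $I$ is retractive) and that conditions (iv),(v) of Definition~\ref{de:lexid} hold are exactly conditions (iv),(v),(ii),(iii) of Definition~\ref{de:strong} transported through $\delta_I$; hence $I$ is a lexicographic ideal of $M$ with $M/I\cong\Gamma(H,u)$.

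It remains to check that $M':=\{c_t:t\in[0,u]_H\}$ is a subalgebra of $M$ isomorphic to $\Gamma(H,u)$ and that $\delta_I$ is the isomorphism. The key intermediate fact — and the only place where any work is hidden — is that the lattice order of $M$ restricted to $M'$ coincides with the linear order $c_s<c_t\iff s<t$ inherited from $[0,u]_H$; this was established in the proof of Theorem~\ref{th:3.8} using that $M'$ is a pseudo effect subalgebra of $M$ with $c_t^-=c_{u-t}$ and $c_t^\sim=c_{-t+u}$. Granting this, $c_{s\wedge t}=c_s\wedge c_t$, and together with (ii),(iii) of Definition~\ref{de:strong} and the order coincidence (so $c_{t-s}=c_t-c_s$ and $c_{-s+t}=-c_s+c_t$ for $s<t$) formula (2.2) shows $M'$ is closed under $\oplus$, hence a subalgebra; then $t\mapsto c_t$ is a bijective homomorphism $\Gamma(H,u)\to M'$, i.e. an isomorphism, and this map is exactly $\delta_I$.

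I expect no genuine obstacle: the only subtlety is bookkeeping the identification $M/I=\Gamma(H,u)$ and recalling, rather than reproving, the order-coincidence lemma buried in the proof of Theorem~\ref{th:3.8}; everything else is a direct transcription of the clauses defining a strongly $(H,u)$-perfect algebra and of equation (2.2).
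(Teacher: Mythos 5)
Your proposal is correct and matches the paper exactly: the paper's own proof of this corollary is a one-line reference to the implication (ii) $\Rightarrow$ (i) of Theorem~\ref{th:3.8}, and you have simply unpacked the same argument, citing the same ingredients (Theorem~\ref{th:3.2}(vi),(ix),(vii), the clauses of Definition~\ref{de:strong}, the order coincidence on $M'$, and formula (2.2)).
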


\begin{proof}
It follows from the proof of implication (ii) $\Rightarrow$ (i) of Theorem \ref{th:3.8}.
\end{proof}

As an additional corollary, we have that every strongly $(H,u)$-perfect pseudo MV-algebra is $(H,u)$-lexicographic, and consequently, it is lexicographic.

\begin{corollary}\label{co:3.10}
Let $(H,u)$ be a linearly ordered unital group. If $M$ is a strongly $(H,u)$-perfect pseudo MV-algebra, then there is a unique up to isomorphism $\ell$-group $G$ such that
$$
M\cong \Gamma(H \lex G,(u,0)).
$$
\end{corollary}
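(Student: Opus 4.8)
The plan is to deduce this directly from Theorem \ref{th:3.8} together with the existence-and-uniqueness statement of Theorem \ref{th:local1}, so the proof is essentially bookkeeping. First I would invoke Theorem \ref{th:3.8}: since $M$ is strongly $(H,u)$-perfect, assertion (ii) of that theorem holds, hence so does assertion (i), i.e. $M$ is $I$-representable for some lexicographic ideal $I$ with $M/I=\Gamma(H,u)$. Concretely, by Corollary \ref{co:3.9} one may take $I=M_0$: it is a lexicographic ideal of $M$ with $M/I\cong \Gamma(H,u)$, and the subalgebra $M'=\{c_t: t\in[0,u]_H\}$ together with $\delta_I$ witnesses retractivity.

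Next I would apply Theorem \ref{th:local1} to the pair $(M,I)$. That theorem delivers a linearly ordered unital group with $M/I$ isomorphic to its $\Gamma$-algebra, and an $\ell$-group $G$ with $\langle I\rangle\cong \Gamma(\mathbb Z\lex G,(1,0))$, such that $M\cong \Gamma(H\lex G,(u,0))$. The only point requiring a moment's care is that Theorem \ref{th:local1} produces \emph{some} linear unital group, not a priori $(H,u)$ itself; but since $M/I\cong\Gamma(H,u)$ is linearly ordered, the uniqueness clause of the basic representation Theorem \ref{th:2.1} (and the remark that a linear pseudo MV-algebra comes from a linearly ordered unital group, unique up to isomorphism of unital $\ell$-groups) lets us identify that group with $(H,u)$. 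This gives the existence of $G$ with $M\cong\Gamma(H\lex G,(u,0))$.

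For uniqueness, I would quote the final clause of Theorem \ref{th:local1}: if $G'$ is another $\ell$-group with $M\cong\Gamma(H\lex G',(u,0))$, then $G'\cong G$. Unwinding it, an isomorphism $\Gamma(H\lex G,(u,0))\to\Gamma(H\lex G',(u,0))$ carries the canonical $(H,u)$-decomposition of the source to an $(H,u)$-decomposition of the target, so by Theorem \ref{th:3.2}(viii) it maps $\{(0,g):g\in G^+\}$ onto $\{(0,g'):g'\in G'^+\}$; since these are the positive cones of $G$ and $G'$ respectively, $G\cong G'$ as $\ell$-groups. I do not anticipate any genuine obstacle: every needed fact has already been established, and the argument is just a matter of chaining Theorems \ref{th:3.8} and \ref{th:local1} and matching the linear unital group to the prescribed $(H,u)$.
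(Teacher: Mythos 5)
Your proposal is correct and follows exactly the paper's route: the paper's own proof is the one-line observation that the corollary follows from Theorem \ref{th:3.8} combined with Theorem \ref{th:local1}, which is precisely the chain you spell out. The extra care you take in identifying the linear unital group with $(H,u)$ and in unwinding the uniqueness clause via Theorem \ref{th:3.2}(viii) is consistent with, and just a more detailed rendering of, what the paper leaves implicit.
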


\begin{proof}
It follows from Theorem \ref{th:3.8} and Theorem \ref{th:local1}.
\end{proof}

\section{Categorical Equivalence}

As we have seen, $I$-representable pseudo MV-algebras, strongly $(H,u)$-perfect pseudo MV-algebras, and $(H,u)$-lexicographic pseudo MV-algebras, where $I$ is a lexicographic ideal of $M$ such that $M/I \cong \Gamma(H,u)$, are the same objects.

In this section, we establish the categorical equivalence of the category of $(H,u)$-lexicographic pseudo MV-algebras with the variety of $\ell$-groups. This extends the categorical representation perfect MV-algebras proved in \cite{DiLe}, the one of strongly $n$-perfect pseudo MV-algebras from \cite{Dv08}, the one of $\mathbb H$-perfect pseudo MV-algebras from \cite{264} as well as the categorical equivalence of lexicographic MV-algebras from \cite{DFL} with the variety of $\ell$-groups.

Thus we assume that $(H,u)$ is in this section a fixed linearly ordered unital group.

Let $M$ be an $(H,u)$-lexicographic pseudo MV-algebra, i.e. $M$ is a pseudo MV-algebra with a lexicographic ideal $I$ such that $M/I\cong \Gamma(H,u)$, and in addition, there is a subalgebra $M_I$ of $M$ which is isomorphic to $M/I$ and there is an isomorphism $\delta_I:M/I \to M_I$ such that $\pi_I \circ \delta_I =id_{M/I}$. Without loss of generalization, we will assume that $M/I =\Gamma(H,u)$; otherwise, we change $\pi_I$, $\delta_I$ and $id_{M/I}$ to $\pi_I':=\iota_I\circ \pi_I$,  $\delta_I'=\delta_I\circ \iota_I^{-1}$ (an isomorphism from $\Gamma(H,u)$ onto $M_I$), and $id_{\Gamma(H,u)}$, respectively, where $\iota_I:M/I \to \Gamma(H,u)$ is an isomorphism.

In other words, our $(H,u)$-lexicographic pseudo MV-algebra can be characterized by a quadruplet $(M,I,M_I,\delta_I)$, and $\delta_I$ will be now an isomorphism from $\Gamma(H,u)$ onto $M_I$.

For example, let $M= \Gamma(H \lex G,(u,0))$ for some $\ell$-group $G$. If we put $J=\{(0,g): g \in G^+\}$, then $\Gamma(H\lex G, (u,0))/J \cong \Gamma(H,u)$, $M_J:=\{(t,0): t\in [0,u]_H\}$ is a subalgebra of $\Gamma(H\lex G,(u,0))$ isomorphic to $\Gamma(H,u)\cong \Gamma(H\lex G, (u,0))/J$ with an isomorphism $\delta_J(t)=(t,0)$, $t \in [0,u]_H$, satisfying $\pi_J\circ \delta_J=id_{M/J}$.

Therefore, we define the category $\mathcal {LP}_s\mathcal{MV}_{H,u}$ of $(H,u)$-lexicographic pseudo MV-algebras whose objects are quadruplets $(M,I,M_I,\delta_I)$, where $I$ is a lexicographic ideal of $M$ such that $M/I \cong \Gamma(H,u)$, $M_I$ is a subalgebra of $M$ isomorphic to $M/I$ with an isomorphism $\delta_I: M/I \to M_I$ such that $\pi_I \circ \delta_I =id_{M/I}$. For example $(\Gamma(H \lex G,(u,0)), J, M_J, \delta_J)$ is an object of $\mathcal {LP}_s\mathcal{MV}_{H,u}$.

If $(M_1,I_1,M_{I_1},\delta_{I_1})$ and $(M_2,I_2,M_{I_2},\delta_{I_2})$ are two objects of $\mathcal {LP}_s\mathcal{MV}_{H,u}$, then a morphism $f: (M_1,I_1,M_{I_1},\delta_{I_1}) \to (M_2,I_2,M_{I_2},\delta_{I_2})$ is a homomorphism of pseudo MV-algebras $f: M_1 \to M_2$ such that

$$ f(I_1) \subseteq I_2,\quad f(M_{I_1})\subseteq M_{I_2}, \ \mbox{ and }
f\circ \delta_{I_1}=\delta_{I_2}.
$$

It is straightforward to verify that $\mathcal {LP}_s\mathcal{MV}_{H,u}$ is a well-defined category.

Now let $\mathcal{LG}$ be the category whose objects are $\ell$-groups  and morphisms are homomorphisms of $\ell$-groups.

Define a mapping $\mathcal M^s_{H,u}: \mathcal{LG} \to  \mathcal {LP}_s\mathcal{MV}_{H,u}$ as follows: for $G\in \mathcal{LG},$ let
$$
\mathcal M^s_{H,u}(G):= (\Gamma(H\lex G,(u,0)),J,H_J,\delta_J)\eqno(4.1)
$$
and if $h: G \to G_1$ is an $\ell$-group homomorphism, then

$$
\mathcal M^s_{H,u}(h)(t,g)= (t, h(g)), \quad (t,g) \in \Gamma(H\lex G,(u,0)). \eqno(4.2)
$$

\begin{proposition}\label{pr:4.1}
$\mathcal M^s_{H,u}$ is a well-defined functor that is a faithful and full
functor from the category $\mathcal{LG}$ of $\ell$-groups  into the
category $\mathcal{LP}_s\mathcal{MV}_{H,u}$ of $(H,u)$-lexicographic pseudo MV-algebras.
\end{proposition}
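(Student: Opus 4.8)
The plan is to verify, in order, that $\mathcal M^s_{H,u}$ sends objects to objects, sends morphisms to morphisms, respects identities and composition, and is faithful and full. For the object part I would take $G\in\mathcal{LG}$ and check that the quadruplet in (4.1) is indeed an object of $\mathcal{LP}_s\mathcal{MV}_{H,u}$; this is exactly the example already worked out in the text before the definition of the category, namely that $J=\{(0,g):g\in G^+\}$ is a lexicographic ideal of $\Gamma(H\lex G,(u,0))$ by Proposition \ref{pr:loc6}, that $\Gamma(H\lex G,(u,0))/J\cong\Gamma(H,u)$, that $H_J=\{(t,0):t\in[0,u]_H\}$ is a subalgebra isomorphic to the quotient, and that $\delta_J(t)=(t,0)$ satisfies $\pi_J\circ\delta_J=id$. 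So the object part is essentially a citation of earlier material.

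For the morphism part, given an $\ell$-group homomorphism $h:G\to G_1$, I would first check that the map defined by (4.2) is well-defined: if $(t,g)\in\Gamma(H\lex G,(u,0))$, then either $0<t<u$ (and then $(t,h(g))$ is automatically in $\Gamma(H\lex G_1,(u,0))$), or $t=0$ and $g\ge 0$ in $G$ (so $h(g)\ge 0$ in $G_1$ since $h$ is an $\ell$-homomorphism, hence $h$ preserves the positive cone), or $t=u$ and $g\le 0$ (similarly). Then I would verify it is a homomorphism of pseudo MV-algebras: since $\mathcal M^s_{H,u}(h)$ is the restriction to the interval of the $\ell$-group homomorphism $\mathrm{id}_H\times h: H\lex G\to H\lex G_1$ (which preserves the lexicographic order because $h$ preserves order and hence also preserves $\wedge,\vee$, and sends the unit $(u,0)$ to $(u,0)$), it automatically preserves $\oplus$, $^-$, $^\sim$, $0$ and $1$ by the formulas defining $\Gamma$. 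Finally I would check the three morphism conditions: $\mathcal M^s_{H,u}(h)(J)\subseteq J_1$ because $(0,g)\mapsto(0,h(g))$ and $h(g)\ge 0$; $\mathcal M^s_{H,u}(h)(H_J)\subseteq H_{J_1}$ because $(t,0)\mapsto(t,h(0))=(t,0)$; and $\mathcal M^s_{H,u}(h)\circ\delta_J=\delta_{J_1}$ because both send $t$ to $(t,0)$.

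Functoriality is immediate: $\mathcal M^s_{H,u}(id_G)(t,g)=(t,id_G(g))=(t,g)$, and $\mathcal M^s_{H,u}(h'\circ h)(t,g)=(t,h'(h(g)))=\mathcal M^s_{H,u}(h')(\mathcal M^s_{H,u}(h)(t,g))$. For faithfulness, suppose $\mathcal M^s_{H,u}(h)=\mathcal M^s_{H,u}(h')$; evaluating at $(0,g)$ for $g\in G^+$ gives $(0,h(g))=(0,h'(g))$, so $h=h'$ on $G^+$, and since $G^+$ generates $G$ as a group, $h=h'$. For fullness, let $f:\mathcal M^s_{H,u}(G)\to\mathcal M^s_{H,u}(G_1)$ be a morphism in $\mathcal{LP}_s\mathcal{MV}_{H,u}$; then $f(J)\subseteq J_1$, i.e. $f$ restricts to a map $J\to J_1$, hence a map $G^+\to G_1^+$ which is additive (it preserves the partial operation $+$, since $f$ is a pseudo MV-homomorphism and $+$ is definable from $\oplus$ via (2.1)) and preserves $0$; by the Birkhoff-type universal property of the positive cone used in the proof of Theorem \ref{th:local1}, this extends uniquely to an $\ell$-group homomorphism $h:G\to G_1$. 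I would then argue $\mathcal M^s_{H,u}(h)=f$: both agree on $J=M_0$ by construction, both agree on $H_J$ (since $f(H_J)\subseteq H_{J_1}$ and $f\circ\delta_J=\delta_{J_1}$ forces $f(t,0)=(t,0)$), and an arbitrary element $(t,g)$ of the interval with $0<t<u$ can be written using the partial $+$ in terms of an element of $H_J$ and elements of $\langle J\rangle$ exactly as in the surjectivity argument of Claim 2 in the proof of Theorem \ref{th:local1}, so agreement on $J$ and $H_J$ propagates to all of $M$.

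The main obstacle I expect is the fullness argument: one must produce $h$ from $f$ and then show $\mathcal M^s_{H,u}(h)=f$ globally, not just on $M_0$ and $M_I$. The cleanest route is to reuse the decomposition machinery already developed — every element of the interval lies in some slice $M_t$ and can be expressed via the partial group addition $+$ from $\delta_J(t)=(t,0)$ and elements of the positive cone, using condition (iv)/(v) of Definition \ref{de:lexid} — so that a pseudo MV-homomorphism is determined by its values on $M_0$ and on the $c_t$'s. This is where lexicographicity of the ideal (strictness, retractivity, condition (iv)) is genuinely used, and is the only step that is not a one-line verification.
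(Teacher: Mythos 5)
Your proposal follows the paper's own proof in all essentials: well-definedness of the functor on objects and morphisms (the paper leaves the object part and functoriality implicit, as you may also do, since they reduce to Proposition \ref{pr:loc6} and the formula (4.2)), faithfulness by evaluating on $\{(0,g):g\in G^+\}$, and fullness by restricting $f$ to $J\cong G^+$ and extending to an $\ell$-group homomorphism. Two remarks. First, the one place where you are lighter than the paper is the claim that the additive, $0$-preserving map $G^+\to G_1^+$ ``extends uniquely to an $\ell$-group homomorphism'' by a Birkhoff-type universal property: additivity and monotonicity only give a po-group homomorphism, and the paper spends a separate paragraph verifying $h(a\wedge b)=h(a)\wedge h(b)$ for arbitrary $a,b\in G$ (writing $a=a^+-a^-$, $b=b^+-b^-$ and using that $f$, being a pseudo MV-homomorphism, preserves $\wedge$ on the cone, so that $h((a^++b^-)\wedge(a^-+b^+))=h(a^++b^-)\wedge h(a^-+b^+)$, from which the general case follows by translation). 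You should make this step explicit rather than fold it into the extension. Second, for the final identification $\mathcal M^s_{H,u}(h)=f$ you stay inside the algebra and propagate agreement from $J$ and $H_J$ to every slice via the partial addition and the decomposition of Claim 2 of Theorem \ref{th:local1}; the paper instead invokes Theorem \ref{th:2.1} to extend $f$ to a unital $\ell$-group homomorphism $\bar f$ and computes $f(t,g)=\bar f(t,0)+\bar f(0,g)=(t,h(g))$ in one line. Both routes are valid; the paper's is shorter because the universal-group extension is already available, while yours avoids leaving the interval and makes visible exactly where conditions (iv)--(v) of Definition \ref{de:lexid} are used.
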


\begin{proof}
First we show that $\mathcal M^s_{H,u}$ is a well-defined functor. Alias, we have to show that if $h$ is a morphism of $\ell$-groups, then $\mathcal M^s_{H,u}(h)$ is a morphism in the category $\mathcal{LP}_s\mathcal{MV}_{H,u}$. Let $\mathcal M^s_{H,u}(G):= (\Gamma(H\lex G,(u,0)),J,H_J,\delta_J)$ and $\mathcal M^s_{H,u}(G_1):= (\Gamma(H\lex G_1,(u,0)),J_1,H_{J_1},\delta_{J_1})$. Then $\mathcal M^s_{H,u}(h)(J)\subseteq J_1$ and $\mathcal M^s_{H,u}(h)(H_J)\subseteq H_{J_1}$. Check, let $t \in \Gamma(H,u)$, then $\mathcal M^s_{H,u}(h)\circ \delta_J(t)=\mathcal M^s_{H,u}(h)(t,0)=(t,0)=\delta_{J_1}(t)$.

Let $h_1$ and $h_2$ be two morphisms from $G$
into $G'$ such that $\mathcal M^s_{H,u}(h_1) = \mathcal M^s_{H,u}(h_2)$. Then
$(0,h_1(g)) = (0,h_2(g))$ for each $g \in G^+$, consequently $h_1 =
h_2.$

To prove that $\mathcal M^s_{H,u}$ is a full  functor, let
$f: (\Gamma(H\lex G, (u,0)),J,H_J,\delta_J) \to (\Gamma(H\lex G', (u,0)),J',H_{J'},\delta_{J'})$ be a morphism from $\mathcal{LP}_s\mathcal{MV}_{H,u}$. We claim that $f(t,0)=(t,0)$ for each $t \in \Gamma(H,u)$. Indeed, we have $f(t,0)=f(\delta_J(t))=\delta_{J'}(t)=(t,0)$.

In addition,  we have $f(0,g)
= (0,g')$ for a unique $g' \in G'^+$. Define a mapping $h:\ G^+ \to
G'^+$ by $h(g) = g'$ iff $f(0,g) =(0,g').$ Then $h(g_1+g_2) = h(g_1)
+ h(g_2)$ if $g_1,g_2 \in G^+.$
Assume now that $g \in G$ is arbitrary. Then $g=g^+-g^-,$ where $g^+=g\vee 0$ and $g^-=-(g\wedge 0)$, and $g=-g^-+g^+$. If $g=g_1-g_2,$ where $g_1,g_2 \in G^+$, then $g^++g_2=g^-+g_1$ and $h(g^+)+ h(g_2)=h(g^-)+h(g_1)$ which shows that $h(g) = h(g_1) - h(g_2)$ is a
well-defined extension of $h$ from $G^+$ onto $G$.

If $0\le g_1 \le g_2$, then $(0,g_1)\le (0,g_2),$
which means that $h$ is a mapping preserving the partial order.

We have yet to show that $h$ preserves $\wedge$ in $G$, i.e., $h(a \wedge b) = h(a) \wedge h(b)$ whenever $a,b \in G.$ Let $a=  a^+- a^-$ and $b=
b^+- b^-$, and $a =-a^- +a^+$, $b = -b^- + b^+$. Since , $h((a^+
+b^-) \wedge (a^- + b^+)) = h(a^+ +b^-) \wedge h(a^- + b^+).$
Subtracting $h(b^-)$ from the right hand and $h(a^-)$ from the left
hand, we obtain the statement  in question.

By Theorem \ref{th:2.1}, the homomorphism $f: \Gamma(H\lex G,(u,0))\to \Gamma(H\lex G',(u,0))$ can be uniquely extended to a morphism of unital $\ell$-groups $\bar f: (H\lex G,(u,0))\to (H\lex G',(u,0))$. Then $f(t,g)=\bar f(t,g)=\bar f(t,0)+\bar f(0,g)= (t,0)+(0,h(g))=(t,h(g))$.

Finally, we have proved that $h$ is a homomorphism of $\ell$-groups, and $\mathcal M^s_{H,u}(h) = f$ as claimed.
\end{proof}

We note that by a {\it universal group}  for a
pseudo MV-algebra $M$ we mean a pair $(G,\gamma)$ consisting of an
$\ell$-group $G$ and a $G$-valued measure $\gamma :\, M\to G^+$
(i.e., $\gamma (a+b) = \gamma(a) + \gamma(b)$ whenever $a+b$ is
defined in $M$) such that the following conditions hold: {\rm (i)}
$\gamma(M)$ generates ${ G}$. {\rm (ii)} If $K$ is a group and
$\phi:\, M\to K$ is an $K$-valued measure, then there is a group
homomorphism ${\phi}^*:{ G}\to K$ such that $\phi ={\phi}^*\circ
\gamma$.

Due to \cite{151}, every pseudo MV-algebra admits a universal group,
which is unique up to isomorphism, and $\phi^*$ is unique. The
universal group for $M = \Gamma(G,u)$ is $(G,id)$ where $id$ is an
embedding of $M$ into $G$.

\begin{proposition}\label{pr:4.2}
The functor $\mathcal M^s_{H,u}$ from the
category $\mathcal{LG}$ into  $\mathcal{LP}_s\mathcal{MV}_{H,u}$ has  a left-adjoint.
\end{proposition}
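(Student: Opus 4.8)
The plan is to construct an explicit left adjoint $L\colon\mathcal{LP}_s\mathcal{MV}_{H,u}\to\mathcal{LG}$ and to verify that the isomorphism produced in the proof of Theorem \ref{th:local1} is its unit. On objects, given $(M,I,M_I,\delta_I)$ we may assume $M/I=\Gamma(H,u)$; by Theorem \ref{th:3.8}, $M$ is strongly $(H,u)$-perfect, and for the unique $(H,u)$-decomposition $M_t:=\pi_I^{-1}(\{t\})$ one has $I=M_0$. By Theorem \ref{th:3.2}(vi) together with the Birkhoff-type argument in the proof of Theorem \ref{th:local1}, $M_0$ (with the partial addition $+$) is the positive cone of an $\ell$-group that is unique up to isomorphism; fixing such a choice, set $L(M,I,M_I,\delta_I):=G$. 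On morphisms, if $f\colon(M_1,I_1,M_{I_1},\delta_{I_1})\to(M_2,I_2,M_{I_2},\delta_{I_2})$ is a morphism of $\mathcal{LP}_s\mathcal{MV}_{H,u}$, then $f(I_1)\subseteq I_2$ and $f$ is a homomorphism of pseudo effect algebras, so $f$ restricts to a monoid homomorphism $(M_1)_0\to(M_2)_0$ of positive cones, which extends uniquely to an $\ell$-group homomorphism $L(f)\colon G_1\to G_2$ by exactly the extension argument used in the proof of Proposition \ref{pr:4.1} (write $g=g^+-g^-$ and verify preservation of $\wedge$). Functoriality of $L$ is then immediate from uniqueness of this extension.

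Next I would exhibit the unit. For an object $(M,I,M_I,\delta_I)$ with $M/I=\Gamma(H,u)$ and $c_t:=\delta_I(t)$, let $\eta_{(M,I,M_I,\delta_I)}:=\phi\colon M\to\Gamma(H\lex G,(u,0))$ be the map $\phi(x)=(t,x-c_t)$ for $x\in M_t$, which was constructed and shown to be an isomorphism of pseudo MV-algebras in the proof of Theorem \ref{th:local1}. I would check that $\phi$ is in fact a morphism $(M,I,M_I,\delta_I)\to\mathcal M^s_{H,u}(G)$ in $\mathcal{LP}_s\mathcal{MV}_{H,u}$, and even an isomorphism there: indeed $\phi(I)=\phi(M_0)=\{(0,g):g\in G^+\}=J$, $\phi(M_I)=\phi(\{c_t:t\in[0,u]_H\})=\{(t,0):t\in[0,u]_H\}=H_J$, and $\phi\circ\delta_I=\delta_J$ by Corollary \ref{co:3.9}, while $\phi^{-1}$ satisfies the dual conditions. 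Naturality of $\eta$, namely $\eta_{(M_2,\dots)}\circ f=\mathcal M^s_{H,u}(L(f))\circ\eta_{(M_1,\dots)}$, is checked by evaluating both sides on an element $x\in(M_1)_t$ and unwinding the definitions; here the hypotheses $f(M_{I_1})\subseteq M_{I_2}$ and $f\circ\delta_{I_1}=\delta_{I_2}$ are precisely what forces $f$ to carry the section element $c_t$ of $M_1$ to the section element $c_t$ of $M_2$, which is what makes the square commute.

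From here the adjunction is formal. Given an $\ell$-group $G'$ and a morphism $f\colon(M,I,M_I,\delta_I)\to\mathcal M^s_{H,u}(G')$, the composite $f\circ\eta_{(M,\dots)}^{-1}\colon\mathcal M^s_{H,u}(G)\to\mathcal M^s_{H,u}(G')$ equals $\mathcal M^s_{H,u}(\bar f)$ for a unique $\ell$-group homomorphism $\bar f\colon G\to G'$, by fullness and faithfulness of $\mathcal M^s_{H,u}$ (Proposition \ref{pr:4.1}); then $\mathcal M^s_{H,u}(\bar f)\circ\eta_{(M,\dots)}=f$, and this is the only factorization of $f$ through $\eta_{(M,\dots)}$. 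Thus $\eta_{(M,\dots)}$ is a universal arrow from $(M,I,M_I,\delta_I)$ to $\mathcal M^s_{H,u}$, so $L$ is left adjoint to $\mathcal M^s_{H,u}$ with unit $\eta$, yielding the natural bijection $\mathrm{Hom}_{\mathcal{LG}}(L(M,\dots),G')\cong\mathrm{Hom}_{\mathcal{LP}_s\mathcal{MV}_{H,u}}((M,\dots),\mathcal M^s_{H,u}(G'))$; since in addition every $\eta_{(M,\dots)}$ is an isomorphism and $\mathcal M^s_{H,u}$ is full and faithful, $\mathcal M^s_{H,u}$ is in fact an equivalence of categories.

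The substantive content — the representation $M\cong\Gamma(H\lex G,(u,0))$ and the recovery of $G$ as the $\ell$-group with positive cone $M_0$ — is already supplied by Theorem \ref{th:local1}, so I expect Proposition \ref{pr:4.2} to be mostly bookkeeping. The one genuinely delicate step should be the naturality of $\eta$ (equivalently, the verification that $L$ is functorial compatibly with $\eta$): since $\eta$ is defined slice-by-slice through the chosen section $\delta_I$, one must use that the morphisms of $\mathcal{LP}_s\mathcal{MV}_{H,u}$ are required to intertwine the sections and to preserve the distinguished subalgebras $M_I$ in order to see that they respect this piecewise description.
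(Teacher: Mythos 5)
Your proposal is correct and follows essentially the same route as the paper: both take the universal arrow at $(M,I,M_I,\delta_I)$ to be the isomorphism $x\mapsto (t,x-c_t)$ from Theorem \ref{th:local1} onto $\Gamma(H\lex G,(u,0))$, where $G$ is the $\ell$-group whose positive cone is $I=M_0$, and both check that this map respects the quadruplet structure. The only (harmless) difference is how the universal property is verified: the paper factors an arbitrary morphism $f'$ through the universal group $(H\lex G,\gamma)$ of \cite{151} and then extracts the $\ell$-group homomorphism $\beta:G\to G'$, whereas you obtain the factorization formally from the fullness and faithfulness of $\mathcal M^s_{H,u}$ established in Proposition \ref{pr:4.1} together with the fact that the unit is an isomorphism.
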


\begin{proof}
The proof follows the ideas of the proof of \cite[Prop 8.3]{275}, but we present it in its fullness to be self-contained.

Every object $(M,I,M_I,\delta_I)$ in $\mathcal{LP}_s\mathcal{MV}_{H,u}$, has a universal arrow $(G,f)$, i.e., $G$ is an object in $\mathcal{LG}$ and $f$ is a homomorphism from the pseudo MV-algebra
$M$ into $\mathcal M^s_{H,u}(G)$ such that if $G'$ is an object from $\mathcal{LG}$ and $f'$ is a homomorphism from $M$ into ${\mathcal  M}^s_{H,u}(G')$, then
there exists a unique morphism $f^*:\, G \to G'$ such that ${\mathcal
M}^s_{H,u}(f^*)\circ f = f'$.

Since by Theorem \ref{th:3.8}, $M$ is also a strongly $(H,u)$-perfect pseudo MV-algebra with an  $(H,u)$-decomposition $(M_t: t \in [0,u]_H)$ and with a family $(c_t: t \in [0,u]_H)$  of elements of $M$ satisfying Definition \ref{de:strong}, by Theorem \ref{th:local1}, there is a unique (up to isomorphism of $\ell$-groups) $\ell$-group $G$  such that $M \cong \Gamma(H \lex G,(u,0)).$ By \cite[Thm 5.3]{151}, $(H \lex G, \gamma)$ is a universal group for $M,$ where $\gamma: M \to  \Gamma(H \lex G, (u,0))$ is defined by $\gamma(a) = (t,a -c_t)$ if $a \in M_t$, see (3.3).

We assert that $(G,\gamma)$ is a universal arrow for $(M,I,M_I,\delta_I)$. Clearly we have that $\gamma:(M,I,M_I,\delta_I) \to (\Gamma(H\lex G,(u,0)),J,H_{J},\delta_{J})$ is a morphism. In addition, assume that $f':(M,I,M_I,\delta_I) \to (\Gamma(H\lex G',(u,0)),J',H_{J'},\delta_{J'})$ is an arbitrary morphism. There is a unique homomorphism of unital $\ell$-groups $\alpha : (H\lex G, (u,0))\to (H\lex G',(u,0))$ such that $\Gamma(\alpha) \circ \gamma = f'$, where $\Gamma$ is a functor  from the category of unital $\ell$-groups into the category of pseudo MV-algebras, see Theorem \ref{th:2.1}. Since $f'$ and $\gamma$ are morphisms of the category $\mathcal{LP}_s\mathcal{MV}_{H,u}$, then  $\gamma\circ \psi = \psi_J$ and $f'\circ \delta_I = \delta_{J'}$ which entail $\Gamma(\alpha)(J) = \Gamma(\alpha) \circ \gamma \circ \gamma^{-1}(J) = f'\circ(\gamma^{-1}(J))= f'(I)\subseteq J'$, $\Gamma(\alpha)(H_j)=\Gamma(\alpha)\circ \gamma \circ \gamma^{-1}(H_j)= f'(M_I)\subseteq J'$, and $\Gamma(\alpha) \circ \delta_J(t) = \Gamma(\alpha)\circ \gamma \circ \delta_I(t)=f'\circ\delta_I(t)=\psi_{J'}(t)$, $t \in \Gamma(H,u)$. This proves that $\Gamma(\alpha)$ is a morphism from $(\Gamma(H\lex G,(u,0)),J,H_J,\delta_J)$ into $(\Gamma(H\lex G',(u,0)),J',H_{J'},\delta_{J'})$.

Then for each $g\in G$, there is a unique $g'\in G'^+$ such that $\Gamma(\alpha)(0,g)=(0,g')$. Hence, there is an $\ell$-group homomorphism $\beta: G\to G'$ such that $\Gamma(\alpha)(0,g)=(0,\beta(g))$ for each $g \in G$. This gives $\mathcal M^s_{H,u}(\beta)(h,g)= (h,\beta(g))=(h,0)+(0,\beta(g))=\alpha(h,0)+\alpha(0,g)= \alpha(h,g)$ and  $\mathcal M^s_{H,u}(\beta) \circ \gamma = f'$, consequently, $(G,\gamma)$ is a universal arrow for $(M,I,M_I,\delta_I)$.
\end{proof}

Define a mapping ${\mathcal  P}^s_{H,u}: \mathcal  {LP}_s\mathcal{MV}_{H,u}\to \mathcal{LG}$
via ${\mathcal  P}^s_{H,u}(M,I,M_I,\delta_I) := G$ whenever $(H\lex  G, f)$ is a
universal group for $M$. It is clear that if $f_0$ is a morphism
from $(M,I,M_I,\delta_I)\in \mathcal  {LP}_s\mathcal{MV}_{H,u}$ into another one $(N,I_N,N_{I_N},\delta_N)$, then $f_0$ can be uniquely extended to an $\ell$-group homomorphism ${\mathcal  P}^s_{H,u} (f_0)$ from $G$ into $G_1$, where $(H\lex G_1, f_1)$ is a universal group for an $(H,u)$-lexicographic
pseudo MV-algebra $N$. Therefore, we have the following statement.

\begin{proposition}\label{pr:4.3}
The mapping ${\mathcal  P}^s_{H,u}$ is a functor from the
category $\mathcal {LP}_s\mathcal{MV}_{H,u}$ into the category $\mathcal{LG}$ which is a
left-adjoint of the functor ${\mathcal  M}^s_{H,u}$.
\end{proposition}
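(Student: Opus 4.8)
The plan is to establish the adjunction $\mathcal{P}^s_{H,u} \dashv \mathcal{M}^s_{H,u}$ by verifying directly that every object of $\mathcal{LP}_s\mathcal{MV}_{H,u}$ has a universal arrow into the image of $\mathcal{M}^s_{H,u}$, which is exactly what Proposition \ref{pr:4.2} already produces. Concretely, first I would note that given $(M,I,M_I,\delta_I)$, Theorems \ref{th:3.8} and \ref{th:local1} give a unique (up to iso) $\ell$-group $G$ with $M\cong\Gamma(H\lex G,(u,0))$, and by \cite[Thm 5.3]{151} the pair $(H\lex G,\gamma)$ with $\gamma(a)=(t,a-c_t)$ for $a\in M_t$ is a universal group for $M$; this is the object assigned by $\mathcal{P}^s_{H,u}$. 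Proposition \ref{pr:4.2} shows $(G,\gamma)$ is a universal arrow from $(M,I,M_I,\delta_I)$ to the functor $\mathcal{M}^s_{H,u}$, i.e. for every $\ell$-group $G'$ and every morphism $f':(M,I,M_I,\delta_I)\to\mathcal{M}^s_{H,u}(G')$ there is a unique $\beta:G\to G'$ with $\mathcal{M}^s_{H,u}(\beta)\circ\gamma=f'$.

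The second step is the standard categorical fact: a functor admitting a universal arrow at every object is precisely a right adjoint, and the universal-arrow assignment extends canonically to the left adjoint. So I would spell out that $\mathcal{P}^s_{H,u}$ is well-defined on morphisms: given $f_0:(M,I,M_I,\delta_I)\to(N,I_N,N_{I_N},\delta_N)$, composing with the universal arrow $\gamma_N:N\to\mathcal{M}^s_{H,u}(G_1)$ yields a morphism $\gamma_N\circ f_0:M\to\mathcal{M}^s_{H,u}(G_1)$, and the universal property of $\gamma=\gamma_M$ produces a unique $\ell$-group homomorphism $\mathcal{P}^s_{H,u}(f_0):G\to G_1$ with $\mathcal{M}^s_{H,u}(\mathcal{P}^s_{H,u}(f_0))\circ\gamma_M=\gamma_N\circ f_0$. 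Functoriality (preservation of identities and composition) follows from the uniqueness clause in the universal property, and naturality of the unit $\gamma$ is built into this defining equation. The adjunction bijection $\mathrm{Hom}_{\mathcal{LG}}(\mathcal{P}^s_{H,u}(M,\dots),G')\cong\mathrm{Hom}_{\mathcal{LP}_s\mathcal{MV}_{H,u}}((M,\dots),\mathcal{M}^s_{H,u}(G'))$ is then exactly the statement that $\gamma$ is universal, which is Proposition \ref{pr:4.2}.

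The main obstacle—already absorbed into Proposition \ref{pr:4.2}—is checking that the canonically induced $\ell$-group homomorphism $\beta$ genuinely lands in $G'$ and that $\mathcal{M}^s_{H,u}(\beta)$ equals $f'$ as a morphism of the comma-type category (i.e. that it respects the extra data $I$, $M_I$, $\delta_I$), not merely as a pseudo MV-algebra homomorphism; this is where the passage through the universal group of $\ell$-groups from \cite{151} and the compatibility $\Gamma(\alpha)\circ\delta_J=\delta_{J'}$ are needed. Since that work is done, the remaining argument here is purely formal. I would therefore keep the write-up short: invoke Proposition \ref{pr:4.2} for the universal arrows, note the elementary extension of $\mathcal{P}^s_{H,u}$ to morphisms via uniqueness, and conclude by the general principle (see e.g. \cite[Thm IV.1.2]{CDM}-style references, or simply ``by the dual of Freyd's criterion'') that $\mathcal{P}^s_{H,u}$ is left adjoint to $\mathcal{M}^s_{H,u}$.

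\begin{proof}
By Proposition \ref{pr:4.2}, for every object $(M,I,M_I,\delta_I)$ of $\mathcal{LP}_s\mathcal{MV}_{H,u}$ the pair $(G,\gamma)$, where $G = \mathcal{P}^s_{H,u}(M,I,M_I,\delta_I)$ and $\gamma: M \to \mathcal{M}^s_{H,u}(G)$ is the universal group measure $\gamma(a)=(t,a-c_t)$ for $a \in M_t$, is a universal arrow from $(M,I,M_I,\delta_I)$ to the functor $\mathcal{M}^s_{H,u}$. Hence for every $\ell$-group $G'$ and every morphism $f':(M,I,M_I,\delta_I)\to \mathcal{M}^s_{H,u}(G')$ there is a unique $\ell$-group homomorphism $f^*=\beta:G\to G'$ with $\mathcal{M}^s_{H,u}(\beta)\circ \gamma = f'$.

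This yields the action of $\mathcal{P}^s_{H,u}$ on morphisms: if $f_0:(M,I,M_I,\delta_I)\to (N,I_N,N_{I_N},\delta_N)$ is a morphism and $(H\lex G_1,\gamma_1)$ is a universal group for $N$, then $\gamma_1\circ f_0: M\to \mathcal{M}^s_{H,u}(G_1)$ is a morphism of $\mathcal{LP}_s\mathcal{MV}_{H,u}$, so by the universal property of $\gamma$ there is a unique $\ell$-group homomorphism $\mathcal{P}^s_{H,u}(f_0):G\to G_1$ with $\mathcal{M}^s_{H,u}(\mathcal{P}^s_{H,u}(f_0))\circ \gamma = \gamma_1\circ f_0$. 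Preservation of identities and of composition follows at once from the uniqueness clause in this defining equation, so $\mathcal{P}^s_{H,u}$ is a functor, and $\gamma$ is natural in $(M,I,M_I,\delta_I)$.

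Finally, for each object $(M,I,M_I,\delta_I)$ and each $\ell$-group $G'$, the assignment $\beta \mapsto \mathcal{M}^s_{H,u}(\beta)\circ \gamma$ is a bijection
$$
\mathrm{Hom}_{\mathcal{LG}}\bigl(\mathcal{P}^s_{H,u}(M,I,M_I,\delta_I),G'\bigr)\;\longrightarrow\;\mathrm{Hom}_{\mathcal{LP}_s\mathcal{MV}_{H,u}}\bigl((M,I,M_I,\delta_I),\mathcal{M}^s_{H,u}(G')\bigr),
$$
natural in both arguments, exactly because $\gamma$ is a universal arrow. Therefore $\mathcal{P}^s_{H,u}$ is left adjoint to $\mathcal{M}^s_{H,u}$, with unit $\gamma$.
\end{proof}
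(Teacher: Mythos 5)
Your proposal is correct and follows essentially the same route as the paper, which states Proposition \ref{pr:4.3} without a separate proof precisely because it is the standard consequence of Proposition \ref{pr:4.2}: the universal arrows $(G,\gamma)$ at each object determine the left adjoint $\mathcal{P}^s_{H,u}$ on objects, the uniqueness clause extends it functorially to morphisms, and the hom-set bijection is the universal property itself (the relevant general principle is \cite[Thm IV.1.2]{MaL}, not a \cite{CDM}-style reference, but that is only a citation slip in your preamble). Your write-up merely makes explicit the ``It is clear that\dots'' step the paper leaves to the reader, so no gap remains.
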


Now we present  the basic result of this section on a categorical equivalence of the category of $(H,u)$-lexicographic pseudo MV-algebras and the category of $\mathcal{LG}.$

\begin{theorem}\label{th:4.4}
The functor ${\mathcal  M}^s_{H,u}$ defines a categorical
equivalence of the category $\mathcal{LG}$   and the
category $\mathcal {LP}_s\mathcal{MV}_{H,u}$ of $(H,u)$-lexicographic pseudo MV-algebras.

In addition, suppose that $h:\ {\mathcal  M}^s_{H,u}\mathbb (G) \to {\mathcal  M}^s_{H,u}(G')$ is a
homomorphism of pseudo  MV-algebras, then there is a unique homomorphism
$f:\ G \to G'$ of  $\ell$-groups such that $h = {\mathcal  M}^s_{H,u}(f)$, and
\begin{enumerate}
\item[{\rm (i)}] if $h$ is surjective, so is $f$;
 \item[{\rm (ii)}] if $h$ is  injective, so is $f$.
\end{enumerate}
\end{theorem}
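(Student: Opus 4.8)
The strategy is to assemble the categorical equivalence from the three propositions already in place, using the standard fact that a functor with a (left or right) adjoint is a categorical equivalence as soon as the unit and counit of the adjunction are natural isomorphisms. By Proposition \ref{pr:4.1}, $\mathcal M^s_{H,u}$ is full and faithful, and by Propositions \ref{pr:4.2} and \ref{pr:4.3}, $\mathcal P^s_{H,u}$ is a left-adjoint of $\mathcal M^s_{H,u}$. Since $\mathcal M^s_{H,u}$ is full and faithful, the counit $\varepsilon:\mathcal P^s_{H,u}\circ\mathcal M^s_{H,u}\Rightarrow \mathrm{id}_{\mathcal{LG}}$ is automatically a natural isomorphism; concretely, for $G\in\mathcal{LG}$ one checks that $(H\lex G,\gamma)$ with $\gamma=\mathrm{id}$ is a universal group for $\mathcal M^s_{H,u}(G)=\Gamma(H\lex G,(u,0))$, so $\mathcal P^s_{H,u}(\mathcal M^s_{H,u}(G))=G$ up to the canonical $\ell$-group isomorphism, naturally in $G$. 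First I would verify this counit statement explicitly, invoking \cite[Thm 5.3]{151} as in the proof of Proposition \ref{pr:4.2}.

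Next I would show the unit $\eta:\mathrm{id}\Rightarrow \mathcal M^s_{H,u}\circ\mathcal P^s_{H,u}$ is a natural isomorphism. For an object $(M,I,M_I,\delta_I)$, Theorem \ref{th:3.8} says $M$ is $I$-representable, and Theorem \ref{th:local1} produces an $\ell$-group $G=\mathcal P^s_{H,u}(M,I,M_I,\delta_I)$ together with an isomorphism $\phi:M\to\Gamma(H\lex G,(u,0))$ given by (3.3); the map $\phi$ is precisely the universal arrow $\gamma$ from the proof of Proposition \ref{pr:4.2}. One must check that $\phi$ is a morphism in $\mathcal{LP}_s\mathcal{MV}_{H,u}$, i.e. that $\phi(I)=J$, $\phi(M_I)=H_J$, and $\phi\circ\delta_I=\delta_J$: the first two hold because $M_t:=\pi_I^{-1}(\{t\})$ maps onto $\{(t,g):g\in G\}$ (with $M_0\mapsto J$, $M_I\mapsto H_J$), and the third because $\phi(\delta_I(t))=\phi(c_t)=(t,c_t-c_t)=(t,0)=\delta_J(t)$. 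Naturality of $\eta$ follows from the universal-arrow property used to define $\mathcal P^s_{H,u}$ on morphisms. This establishes the categorical equivalence.

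For the additional statements, given a pseudo MV-algebra homomorphism $h:\mathcal M^s_{H,u}(G)\to\mathcal M^s_{H,u}(G')$, fullness of $\mathcal M^s_{H,u}$ gives an $\ell$-group homomorphism $f:G\to G'$ with $\mathcal M^s_{H,u}(f)=h$, and faithfulness gives uniqueness. For (i), if $h$ is surjective then for every $g'\in G'$ the element $(0,g')$ (when $g'\ge 0$) lies in the image, so some $(t,g)$ maps to it, forcing $t=0$ and $f(g)=g'$; since $G'$ is directed by its positive cone, $f$ is onto all of $G'$. For (ii), if $f(g)=0$ then $h(0,g)=(0,0)$ for $g\in G^+$, and injectivity of $h$ gives $g=0$; extending from the cone, $f$ is injective. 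The main obstacle, if any, is the bookkeeping needed to confirm that $\phi$ (equivalently $\gamma$) really is a morphism of the quadruplet category and that this assignment is natural — but this is a direct consequence of Theorem \ref{th:local1} and the construction of $\mathcal P^s_{H,u}$ on arrows, so no genuinely new argument is required.
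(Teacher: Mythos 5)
Your proposal is correct and follows essentially the same route as the paper: the paper also reduces the equivalence to Mac Lane's criterion (full and faithful by Proposition \ref{pr:4.1}, essentially surjective via the universal arrow $(H\lex G,\gamma)$ from Theorem \ref{th:local1} and Proposition \ref{pr:4.2}), which is just the object-level content of your unit/counit argument. The only difference is that you spell out the verifications the paper leaves implicit — that $\phi$ is a morphism of quadruplets and the surjectivity/injectivity claims (i) and (ii) — and these checks are all correct.
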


\begin{proof}
In view of \cite[Thm IV.4.1]{MaL}, it is
necessary to show that, for any $(H,u)$-lexicographic pseudo MV-algebra $(M,I,M_I,\delta_I)$, there is an object $G$ in $\mathcal{LG}$ such that ${\mathcal  M}^s_{H,u}(G)$ is isomorphic to $(M,I,M_I,\delta_I)$. To establish that, we take a universal arrow $(H\lex G, f)$ of $M$. Then ${\mathcal  M}^s_{H,u}(G)$ and $(M,I,M_I,\delta_I)$ are isomorphic.
\end{proof}

\section{Weakly Lexicographic Pseudo MV-algebras}

In this section, we generalize the notion of a retractive ideal, a lexicographic ideal, and a lexicographic pseudo MV-algebra in order to characterize pseudo MV-algebras that can be represented in the form $\Gamma(H \lex G,(u,b))$, where $b\in G^+$ is not necessarily the zero element.

First we introduce another notion of $(H,u)$-perfect pseudo MV-algebra.

\begin{definition}\label{de:weak}
{\rm We say that an $(H,u)$-perfect pseudo MV-algebra $M=\Gamma(K,v)$ is {\it weakly} $(H,u)$-{\it perfect} if it is $(H,u)$-perfect, and if there is a system of elements $(c_t: t \in [0,u]_H)$ of $M$ such that
\begin{enumerate}
\item[{\rm (i)}] $c_t \in M_t$ for each $t \in [0,u]_H$;
\item[{\rm (ii)}] $c_{s+t}=c_s+c_t$ if $s+t \le u$;
\item[{\rm (iii)}] $(x+y)-c_{s+t}=(x-c_s)-(y-c_t)$ if $\in M_s$, $y\in M_t$, $s+t\le u$, where $+$ and $-$ are counted in the $\ell$-group $K$;
\item[{\rm (iv)}] for each $t \in [0,u]_H$ and each $x \in M_t$, we have $x-c_t=-c_t+x$, where $+$ and $-$ are counted in the $\ell$-group $K$.
\end{enumerate}
}
\end{definition}

For example, let $M=\Gamma(H \lex G,(u,b))$ for some $b\in G^+$. We set $c_t=(t,0)$, $t \in [0,u]_H$, then $M$ is weakly $(H,u)$-perfect.

In view of (ii), we have $c_0+c_0=c_0$, so that $c_0=0$. Comparing with Definition \ref{de:strong}, we see that we do not assume that $c_u=1$, therefore, the subset $\{c_t: t\in [0,u]_H\}$ is not necessarily a subalgebra of $M$. Of course every strongly $(H,u)$-perfect pseudo MV-algebra is a weakly $(H,u)$-perfect one, but as we show below, the converse is not true in general.

We note that according to Theorem \ref{th:local1} and Theorem \ref{th:3.8}, every strongly $(H,u)$-perfect pseudo MV-algebra $M$ is of the form $M \cong \Gamma(H\lex G, (u,0))$ and it admits a lexicographic ideal $I$ such that $M/I \cong \Gamma(H,u)$. The MV-algebra $M=\Gamma(\mathbb Z \lex \mathbb Z,(2,1))$ is weakly $(\mathbb Z,2)$-perfect; it contains elements $c_0=(0,0)$, $c_1=(1,0)$, $c_2=(2,0)$, however, as it was already mentioned, it has a unique non-trivial ideal $I=\{(0,n): n\ge 0\}$ and it is not lexicographic because $M$ does not contain any copy of $\Gamma(\frac{1}{2}\mathbb Z,1)$.

On the other hand, it can happen, that a weakly $(H,u)$-perfect pseudo MV-algebra is also strongly $(H,u)$-perfect. Indeed, MV-algebras $M_1=\Gamma(\mathbb Z \lex \mathbb Z,(2,2))$ and $M_2 =\Gamma(\mathbb Z \lex \mathbb Z,(2,-2))$ are isomorphic to $M= \Gamma(\mathbb Z \lex \mathbb Z,(2,0))$; we define isomorphisms $\theta_i: M\to M_i$, $i=1,2$, such that $\theta_1(0,n)= (0,n)$,  $\theta_1(1,n)= (1,n+1)$, $\theta_1(2,n)=(2,n+2)$ and $\theta_2(0,n)= (0,n)$,  $\theta_2(1,n)= (1,n-1)$, $\theta_2(2,n)=(2,n-2)$. In $M_1$ we can define two families $c_1=(0,0),$ $c_2=(1,0)$, $c_3=(2,0)$ which shows that $M_1$ is weakly $(\mathbb Z,2)$-perfect, and the family $c_1'=(0,0),$ $c_2'=(1,1)$, $c_3'=(2,2)$ which shows that it is also strongly $(\mathbb Z,2)$-perfect. The first family does not form a subalgebra of $M$, and the second one does form. For the MV-algebra $M_2$ we have two analogous families: $c_1=(0,0),$ $c_2=(1,-2)$, $c_3=(2,-4)$ and  $c_1'=(0,0),$ $c_2'=(1,-1)$, $c_3'=(2,-2)$.

Now we introduce a weaker form of a retractive ideal.

A normal ideal $I$ of a pseudo MV-algebra $M$ is said to be {\it weakly retractive} if the canonical projection $\pi_I: M  \to M/I$ is weakly retractive, i.e. there is a mapping $\delta_I: M/I \to M$ such that (i) $\pi_I\circ \delta_I= id_{M/I}$, (ii) $\delta_I(x/I+ y/I)=\delta_I(x/I)+ \delta_I(y/I)$ whenever $x/I +y/I \le 1/I$, where $+$ is the partial addition induced by $\oplus$ in pseudo MV-algebras.
Then (1) $\delta_I(0/I)=0$, (2) $\delta_I(x/I)<1$ whenever $x/I < 1/I$, (3) $\delta_I$ is injective.

We note that a weakly retractive ideal is retractive whenever $\delta_I(1/I)=1$.

In addition, we define a weakly lexicographic ideal:

\begin{definition}\label{de:wlexid}
{\rm A normal ideal $I$ of a pseudo MV-algebra $M=\Gamma(G,u)$, $\{0\}\ne I \ne M$, is said to {\it weakly lexicographic} if
\begin{enumerate}
\item[{\rm (i)}] $I$ is strict;
\item[{\rm (ii)}] $I$ is weakly retractive;
\item[{\rm (iii)}] $I$ is prime;
\item[{\rm (iv)}] for each $s,t \in [0,u]_H$, where $\Gamma(H,u):=M/I$, such that $s+t\le u$ and for each $x\in \pi_I^{-1}(\{s\})$ and $y \in \pi^{-1}_I(\{t\})$, we have $x+y-\delta_I(s+t)= (x-\delta_I(s))+(y-\delta_I(t))$, where $+$ and $-$ are counted in the group $G$,
\item[{\rm (v)}] for each $t \in [0,u]_H$ and each $x\in \pi^{-1}_I(\{t\})$, we have $x-\delta_I(t)=-\delta_I(t)+x$, where $+$ and $-$ are counted in the group $G$.
\end{enumerate}
}
\end{definition}

For example, $M=\Gamma(\mathbb Z\lex \mathbb Z,(2,1))$ has no lexicographic ideal, but $I=\{(0,n): n \ge 0\}$ is a weakly lexicographic ideal of $M$.

If $M=\Gamma(H\lex G,(u,b))$ with $b \in G^+$, then $I=\{(0,g): g \in G^+\}$ is a weakly lexicographic ideal of $M$, and $(M_t: t \in [0,u]_H)$, where $M_t=\{(t,g): (t,g)\in M\}$, is an $(H,u)$-decomposition of $M$. In particular, $I^-=I^\sim$.

Now we characterize $(H,u)$-perfect pseudo MV-algebras that can be represented in the form $\Gamma(H\lex G, (u,b))$ for some $b\in G^+$.

\begin{theorem}\label{th:weak}
Let $M$ be a pseudo MV-algebra and let $I$ be a weakly lexicographic ideal of $M$. Then there are a linearly ordered unital group $(H,u)$ such that $E/I \cong \Gamma(H,u)$, an $\ell$-group $G$ with $\langle I\rangle \cong \Gamma(\mathbb Z\lex G,(1,0))$ and an element $b\in G^+$ such that $M \cong \Gamma(H\lex G,(u,b))$.

In addition, if there is an $\ell$-group $G'$ such that $M \cong \Gamma(H\lex G',(u,b'))$ where $b' \in G'^+$, then $G'$ is isomorphic to $G$.
\end{theorem}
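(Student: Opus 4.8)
The plan is to follow the blueprint of the proof of Theorem~\ref{th:local1}, adapting it to the weaker hypotheses. First I would invoke Theorem~\ref{th:2.1} to assume $M=\Gamma(K,v)$ for a unital $\ell$-group $(K,v)$. Since $I$ is weakly lexicographic it is normal and prime, so $M/I$ is linearly ordered and there is a linearly ordered unital group $(H,u)$ with $M/I=\Gamma(H,u)$; set $M_t:=\pi_I^{-1}(\{t\})$ for $t\in[0,u]_H$. Exactly as in Theorem~\ref{th:local1}, strictness of $I$ gives property~(a) of Definition~\ref{de:3.1}, the fact that $\pi_I$ is a homomorphism gives~(b) and~(c), so $(M_t:t\in[0,u]_H)$ is an $(H,u)$-decomposition. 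Then Theorem~\ref{th:3.2}(vi) says $M_0=I$ is a cancellative lattice-ordered semigroup satisfying Birkhoff's conditions, hence $M_0=G^+$ for a unique $\ell$-group $G$, a subgroup of $K$; and since $\langle I\rangle=I\cup I^-$ is perfect, \cite[Prop 5.2]{DDT} gives $\langle I\rangle\cong\Gamma(\mathbb Z\lex G,(1,0))$.

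Next I would set $c_t:=\delta_I(t)$ for $t\in[0,u]_H$, using the weakly retractive $\delta_I$. Unlike in the strong case, now $c_u=\delta_I(1/I)$ need not be $1$; put $b:=1-c_u=v-c_u\in K$. I claim $b\in G^+$: indeed $c_u\le 1=v$ since $\delta_I(1/I)\le 1$, and $\pi_I(b)=\pi_I(1)-\pi_I(c_u)=1/I-1/I=0/I$, so $b\in M_0=G^+$. Then I define $\phi:M\to\Gamma(H\lex G,(u,b))$ by $\phi(x):=(t,x-c_t)$ whenever $x\in M_t$. The well-definedness argument (that $x-c_t$ lies in $G$) is verbatim the one in Claim~1 of Theorem~\ref{th:local1}, using that $(x-c_t)^+=(x\vee c_t)-c_t\in M_0$ and $(x-c_t)^-=c_t-(x\wedge c_t)\in M_0$ by Theorem~\ref{th:3.2}(iii). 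One must check that $\phi$ actually maps into $\Gamma(H\lex G,(u,b))$: for $x\in M_u$ one needs $(u,x-c_u)\le(u,b)$, i.e.\ $x-c_u\le b=1-c_u$, which holds since $x\le 1$; and for $x\in M_0$, $(0,x-c_0)=(0,x)\ge(0,0)$, which is the lower bound. That $\phi$ is an injective, surjective homomorphism of pseudo effect algebras (hence, via (2.2), of pseudo MV-algebras, preserving $\wedge,\vee,{}^-,{}^\sim$) then goes through exactly as Claims~2--5 there, using conditions~(iv) and~(v) of Definition~\ref{de:wlexid} in place of those of Definition~\ref{de:lexid}; surjectivity onto the top slice now uses $\phi(1)=(u,1-c_u)=(u,b)$ and for $0<t<u$ the same decomposition $g=g_1-g_2$ trick. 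This establishes $M\cong\Gamma(H\lex G,(u,b))$.

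For uniqueness of $G$: if $\psi:\Gamma(H\lex G,(u,b))\to\Gamma(H\lex G',(u,b'))$ is an isomorphism with $b'\in G'^+$, then $\psi$ transports the $(H,u)$-decomposition of the first algebra to one of the second, and by Theorem~\ref{th:3.2}(viii) the two $(H,u)$-decompositions of $\Gamma(H\lex G',(u,b'))$ coincide; in particular $\psi(\{(0,g):g\in G^+\})=\{(0,g'):g'\in G'^+\}$, and since this set is the positive cone of $G$ inside $H\lex G$ and of $G'$ inside $H\lex G'$, the restriction of $\psi$ is an $\ell$-group isomorphism $G\cong G'$.

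The main obstacle is the verification that $\phi$ genuinely lands in $\Gamma(H\lex G,(u,b))$ and is \emph{surjective onto it}, i.e.\ that the top slice $M_u$ maps exactly onto $\{(u,g):0\le g\le b\}$ rather than onto a larger or smaller set; this is where the choice $b=1-c_u$ is forced and where one must use $x\le 1$ for $x\in M_u$ together with Theorem~\ref{th:3.2}(iii) to see $x-c_u$ ranges over all of $[0,b]_G$ as $x$ ranges over $M_u$. Once the target algebra is pinned down correctly, the rest is a routine transcription of the proof of Theorem~\ref{th:local1}.
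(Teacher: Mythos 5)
Your proposal follows the paper's route almost exactly (same $(H,u)$-decomposition via $\pi_I$, same $c_t=\delta_I(t)$, same $b=1-c_u$, same map $\phi(x)=(t,x-c_t)$, same uniqueness argument via Theorem~\ref{th:3.2}(viii)), and your identification of $b\in G^+$ and of the surjectivity onto the top slice is fine. However, there is one genuine missing step. You assert that the verification that $\phi$ preserves $^-$ and $^\sim$ ``goes through exactly as Claims~2--5'' of Theorem~\ref{th:local1}. In the non-commutative setting with $b\ne 0$ it does not: one computes
$\phi(x^-)=(u-t,\,x^- - c_{u-t})=(u-t,\,c_t+b-x)$ (using $c_{u-t}=c_u-c_t$ and $-c_u+1=b$), whereas
$\phi(x)^-=(u,b)-(t,x-c_t)=(u-t,\,b+c_t-x)$,
and similarly for $^\sim$. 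These agree only if $b+c_t=c_t+b$ for every $t\in[0,u]_H$, which is not automatic and does not follow from anything you have written. The paper isolates this as a separate Claim and proves it from condition~(iv) of Definition~\ref{de:wlexid} by writing $1=x+x^\sim$ with $x\in M_t$ and computing $b=1-c_u=(x-c_t)+(x^\sim-c_{-t+u})=-c_t+b+c_t$. In the strong case this issue is invisible because $b=0$ commutes with everything, which is exactly why the transcription is not verbatim here. The gap is fillable by this short computation, but as written your argument for $\phi(x^-)=\phi(x)^-$ and $\phi(x^\sim)=\phi(x)^\sim$ is incomplete.
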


\begin{proof}
We follow the main steps of the proof of Theorem \ref{th:local1}. Thus let $M =\Gamma(K,v)$ for some unital $\ell$-group $(K,v)$, $I$ be a weakly lexicographic ideal of $M$, and let $\pi_I:M \to M/I$ be the canonical projection. Since $I$ is prime, there is a linearly ordered unital group $(H,u)$ such that $M/I \cong \Gamma(H,u)$; without loss of generality, we assume $M/I=\Gamma(H,u)$. Then $(M_t: t \in [0,u]_H)$ is an $(H,u)$-decomposition of $M$, where $M_t:=\pi_I^{-1}(\{t\})$, $t \in [0,u]_H$.

Being $M_0=I$, $M_0$ is an associative cancellative semigroup, $M_0$ is a positive cone of an $\ell$-group $G$. By \cite[Prop 5.2]{DDT}, $\langle I\rangle \cong \Gamma(\mathbb Z\lex G,(1,0))$.

Given $t \in [0,u]_H$ we put $c_t=\delta_I(t)$. Since $I$ is weakly lexicographic, $M$ with $(c_t: t \in [0,u]_H)$ is weakly $(H,u)$-perfect, $\{c_t\} = M_t \cap \delta_I(M/I)$. Then $\delta_I(u)\le 1$ and we put $b=1- \delta_I(u)\in G^+$, where $-$ is subtraction counted in the $\ell$-group $G \subseteq K$. We note that according to (v) of Definition \ref{de:wlexid}, $1-c_u=-c_u+1$.

\vspace{2mm}
{\it Claim:} {\it $c_t+b=c_t+b$ for every $t\in [0,u]_H$.}
\vspace{2mm}

Indeed, according to (iv) of Definition \ref{de:wlexid}, we have $b=1-c_u=
x+x^\sim -c_{t+(-t+u)}= x-c_t + x^\sim - c_{-t+u}= -c_t+x-x +1-(-c_t+c_u)=
-c_t+1-c_u+c_t=-c_t+b+c_t=b$, so that $b+c_t=c_t+b$.

We define a pseudo MV-algebra
$$
\mathcal M_{H,u}(G,b):=\Gamma(H\lex G,(u,b)),
$$
and define a mapping $\phi: M\to  \mathcal M_{H,u}(G,b)$ by
$$
\phi(x):=(t,x-c_t) \eqno(5.1)
$$
whenever $x \in M_t$ for some $t \in [0,u]_H$, where $x-c_t$ is a difference taken in the group $K$.

Similarly as in the proof of Theorem \ref{th:local1}, $\phi$ is a well-defined mapping such that (i) $\phi(0)=(0,0)$, $\phi(1)=(u,1-c_u)=(1,b)$. (ii) If $x\in M_t$, then $x^\sim \in M_{-t+u}$, $x^-\in M_{u-t}$, so that $\phi(x)=(t,x-c_t)$ and $\phi(x^\sim)=(-t+u,(-x+1)-(c_{-t+u}))=(-t+u,-x+b+c_t)$. On the other side, $\phi(x)^\sim = -(t,x-c_t) +(u,b)=(-t+u,c_t-x+b)=\phi(x^\sim)$ when we have used (v) of Definition \ref{de:wlexid} and Claim. In a similar way we have, $\phi(x^-)= (u-t,x^- - c_{u-t})=(u-t, -c_{u-t}+ 1-x)= (u-t, c_t-c_u+ 1-x) =(u-t, b+c_t -x)=(u,b)-(t,x-c_t)= \phi(x)^-$.

Using the ideas of the proof of Theorem \ref{th:local1} and (iv) of Definition \ref{de:wlexid}, we have $\phi(x+y)=\phi(x)+\phi(y)$, $\phi$ is injective and surjective which proves that $\phi$ is an isomorphism of pseudo effect algebras, consequently, using (2.2), $\phi$ is an isomorphism of the pseudo MV-algebras $M$ and $\mathcal M_{H,u}(G,b)$.

Finally, if $M \cong \Gamma(H\lex G',(u,b'))$ for some $G'$ and $b' \in G'^+$, then in a similar manner as at the end of the proof of Theorem \ref{th:local1}, we can prove that $G$ and $G'$ are isomorphic $\ell$-groups.
\end{proof}

It is worthy to recall that in Theorem \ref{th:weak}, if $\Gamma(H\lex G,(u,b))$ is isomorphic to $\Gamma(H\lex G',(u,b'))$, where $b \in G^+$ and $b'\in G'^+$, then $G$ and $G'$ are isomorphic $\ell$-groups, but $b$ does not map necessarily to $b'$. Indeed, take $\Gamma(\mathbb Z \lex \mathbb Z, (2,0))$ and $\Gamma(\mathbb Z \lex \mathbb Z,(2,2))$. It was already proved that they are isomorphic, but $b=0$ does not map to $b=2$ under any isomorphism from $\mathbb Z$ onto itself.

Using ideas from the proof of Theorem \ref{th:3.8}, it is possible to show that a pseudo MV-algebra is weakly $(H,u)$-perfect iff $M$ has a weakly lexicographic ideal $I$ such that $M/I \cong \Gamma(H,u)$. Hence, such pseudo MV-algebras can be called also {\it weakly} $(H,u)$-{\it lexicographic} pseudo MV-algebras.

Finally we present a categorical equivalence of the category of weakly $(H,u)$-lexicographic pseudo MV-algebras with the category of pointed $\ell$-groups in an analogous way as it was done in the previous section.

Let $M$ be a weakly $(H,u)$-lexicographic pseudo MV-algebra. It has a weakly lexicographic ideal $I$ such that $M/I\cong \Gamma(H,u)$, and there is an injective mapping $\delta_I: \Gamma(H,u)\cong M/I \to M$ satisfying the conditions of Definition \ref{de:wlexid}; we set $M_I:= \delta_I(M/I)$.

We define the category $\mathcal {WLP}_s\mathcal{MV}_{H,u}$ of weakly $(H,u)$-lexicographic pseudo MV-algebras whose objects are quadruplets $(M,I,M_I,\delta_I)$, where $M$ is a weakly $(H,u)$-perfect pseudo MV-algebra with a weakly lexicographic ideal $I$ such that $M/I \cong \Gamma(H,u)$, $\delta_I:\Gamma(H,u)\cong M/I \to M_I$, see Definition \ref{de:wlexid}. If $M=\Gamma(H\lex G,(u,b))$, we set $J=\{(0,g): g \in G^+\}$, $M_J=\{(t,0): t \in [0,u]_H\}$, $\delta_J(t)=(t,0)$, $t \in [0,u]_H$.

If $(M_1,I_1,M_{I_1},\delta_{I_1})$ and $(M_2,I_2,M_{I_2},\delta_{I_2})$ are two objects of $\mathcal {WLP}_s\mathcal{MV}_{H,u}$, then a morphism $f: (M_1,I_1,M_{I_1},\delta_{I_1}) \to (M_2,I_2,M_{I_2},\delta_{I_2})$ is a homomorphism of pseudo MV-algebras $f: M_1 \to M_2$ such that

$$ f(I_1) \subseteq I_2,\quad f(M_{I_1})\subseteq M_{I_2}, \ \mbox{ and }
f\circ \delta_{I_1}=\delta_{I_2}.
$$

Now we define the category of pointed $\ell$-groups, $\mathcal{PLG}$, i.e., the objects are couples $(G,b)$, where $G$ is an $\ell$-group and $b\in G^+$ is a fixed element, and morphisms are homomorphisms of $\ell$-groups preserving fixed elements. We note that the class of pointed $\ell$-groups is a variety whereas the class of unital $\ell$-groups not.

Define a mapping $\mathcal M^w_{H,u}: \mathcal{PLG} \to  \mathcal {LP}_s\mathcal{MV}_{H,u}$ as follows: for $(G,b)\in \mathcal{PLG},$ let
$$
\mathcal M^w_{H,u}(G,b):= (\Gamma(H\lex G,(u,b)),J,H_J,\delta_J)
$$
and if $h: (G,b) \to (G_1,b_1)$ is an $\ell$-group homomorphism preserving fixed elements, then

$$
\mathcal M^w_{H,u}(h)(t,g)= (t, h(g)), \quad (t,g) \in \Gamma(H\lex G,(u,b)). 
$$

Finally, we present a categorical equivalence in the same way as it was proved in the previous section.

\begin{theorem}\label{th:5.4}
The functor ${\mathcal  M}^w_{H,u}$ defines a categorical
equivalence of the category $\mathcal{PLG}$ and the
category $\mathcal {WLP}_s\mathcal{MV}_{H,u}$ of weakly $(H,u)$-lexicographic pseudo MV-algebras.
\end{theorem}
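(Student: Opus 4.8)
The plan is to mimic closely the development of Section 4, since Theorem~\ref{th:5.4} is the exact analogue of Theorem~\ref{th:4.4} with $\ell$-groups replaced by pointed $\ell$-groups, lexicographic ideals replaced by weakly lexicographic ideals, and strongly $(H,u)$-perfect algebras replaced by weakly $(H,u)$-perfect ones. By \cite[Thm IV.4.1]{MaL} it suffices to check three things: that $\mathcal M^w_{H,u}$ is a well-defined functor, that it is full and faithful, and that it is essentially surjective onto $\mathcal{WLP}_s\mathcal{MV}_{H,u}$. The bulk of the argument is therefore the assembly of analogues of Propositions~\ref{pr:4.1}, \ref{pr:4.2} and \ref{pr:4.3}, which I will carry out in that order.

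First I would verify that $\mathcal M^w_{H,u}$ is a well-defined functor: for a pointed $\ell$-group $(G,b)$ the quadruplet $(\Gamma(H\lex G,(u,b)),J,H_J,\delta_J)$ is indeed an object of $\mathcal{WLP}_s\mathcal{MV}_{H,u}$ — the ideal $J=\{(0,g):g\in G^+\}$ is weakly lexicographic by the remark preceding Theorem~\ref{th:weak}, and $\delta_J(t)=(t,0)$ is the required injective section with $\delta_J(1/J)=(u,0)\le(u,b)=1$. For a morphism $h:(G,b)\to(G_1,b_1)$ of pointed $\ell$-groups one checks $\mathcal M^w_{H,u}(h)(J)\subseteq J_1$, $\mathcal M^w_{H,u}(h)(H_J)\subseteq H_{J_1}$ and $\mathcal M^w_{H,u}(h)\circ\delta_J=\delta_{J_1}$, exactly as in the proof of Proposition~\ref{pr:4.1}; functoriality (identities, composition) is immediate. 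Faithfulness is the same one-line argument: $\mathcal M^w_{H,u}(h_1)=\mathcal M^w_{H,u}(h_2)$ forces $h_1(g)=h_2(g)$ for all $g\in G^+$, hence on all of $G$. For fullness, given a morphism $f:(\Gamma(H\lex G,(u,b)),J,H_J,\delta_J)\to(\Gamma(H\lex G',(u,b')),J',H_{J'},\delta_{J'})$, the condition $f\circ\delta_J=\delta_{J'}$ gives $f(t,0)=(t,0)$ for all $t\in[0,u]_H$, and $f(0,g)=(0,g')$ defines a monoid map $h:G^+\to G'^+$ extending to an $\ell$-homomorphism $h:G\to G'$ preserving $\wedge$, by the same positive/negative part computation as in Proposition~\ref{pr:4.1}. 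One then extends $f$ to $\bar f:(H\lex G,(u,b))\to(H\lex G',(u,b'))$ by Theorem~\ref{th:2.1}, deduces $f(t,g)=(t,h(g))$, and the identity $f(0,b)=f(1)=1=(u,b')\ld(u,0)$ — or rather $f(1)=(u,b')$ forces $(0,h(b))=(0,b')$, i.e. $h(b)=b'$, so $h$ is a morphism of \emph{pointed} $\ell$-groups and $\mathcal M^w_{H,u}(h)=f$. Essential surjectivity is Theorem~\ref{th:weak}: any object $(M,I,M_I,\delta_I)$ of $\mathcal{WLP}_s\mathcal{MV}_{H,u}$ is, by that theorem, isomorphic (as a pseudo MV-algebra, and compatibly with the ideal and section data) to $\Gamma(H\lex G,(u,b))$ for a pointed $\ell$-group $(G,b)$, whence $\mathcal M^w_{H,u}(G,b)\cong(M,I,M_I,\delta_I)$.

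To obtain the adjunction cleanly — and thereby the categorical equivalence via \cite[Thm IV.4.1]{MaL} — I would repeat the universal-arrow construction of Proposition~\ref{pr:4.2}: for $(M,I,M_I,\delta_I)$, using that $M$ is weakly $(H,u)$-perfect with $(M_t)$ and $(c_t)$ as in Definition~\ref{de:weak} and that, by Theorem~\ref{th:weak}, $M\cong\Gamma(H\lex G,(u,b))$, the universal group of $M$ (in the sense of \cite{151}) is $(H\lex G,\gamma)$ with $\gamma(a)=(t,a-c_t)$ for $a\in M_t$. One checks $\gamma$ is a morphism into $\mathcal M^w_{H,u}(G,b)$, and given any morphism $f':(M,I,M_I,\delta_I)\to\mathcal M^w_{H,u}(G',b')$, the universal property of $H\lex G$ yields a unique unital $\ell$-homomorphism $\alpha:(H\lex G,(u,b))\to(H\lex G',(u,b'))$ with $\Gamma(\alpha)\circ\gamma=f'$; restricting $\alpha$ to the second coordinate gives $\beta:G\to G'$ with $\beta(b)=b'$ (since $\alpha$ preserves the strong unit $(u,b)$), i.e. a morphism of pointed $\ell$-groups with $\mathcal M^w_{H,u}(\beta)\circ\gamma=f'$. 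Thus $(G,b),\gamma)$ is a universal arrow, $\mathcal M^w_{H,u}$ has a left adjoint $\mathcal P^w_{H,u}$, and combined with full faithfulness and essential surjectivity the equivalence follows.

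The main obstacle, as in Section~4, is bookkeeping rather than a genuine difficulty: one must consistently track the four pieces of data $(M,I,M_I,\delta_I)$ through each construction and verify that every map produced is a morphism in the \emph{pointed} category, i.e. that the basepoint $b\in G^+$ is preserved — this is the one genuinely new ingredient compared to Theorem~\ref{th:4.4}, and it is forced precisely because morphisms in $\mathcal{WLP}_s\mathcal{MV}_{H,u}$ must satisfy $f\circ\delta_{I_1}=\delta_{I_2}$ and morphisms of unital $\ell$-groups preserve the strong unit $(u,b)$, whose second coordinate is $b$ when $\delta_I(u)\ne 1$. Care is also needed because $\{c_t\}$ is no longer a subalgebra (Definition~\ref{de:weak} drops $c_u=1$), so one cannot literally reuse the subalgebra $M_I$ of the strong case; but $M_I:=\delta_I(M/I)$ still carries enough structure (it is the image of an injective partial-addition-preserving section) to run the arguments above, and the compatibility $c_t+b=b+c_t$ established in the proof of Theorem~\ref{th:weak} is exactly what makes the de Morgan computations go through.
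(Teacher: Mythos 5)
Your proposal is correct and follows exactly the route the paper intends: the paper itself gives no detailed proof of Theorem~\ref{th:5.4}, merely stating that it is obtained ``in the same way as it was proved in the previous section,'' and your write-up is precisely that transposition of Propositions~\ref{pr:4.1}--\ref{pr:4.3} and Theorem~\ref{th:4.4} to the pointed/weak setting, with the one genuinely new point (basepoint preservation $h(b)=b'$ forced by $f(1)=1$ and $f\circ\delta_J=\delta_{J'}$) correctly identified and handled.
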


\begin{problem}
{\rm We note that the class of $(H,u)$-perfect pseudo MV-algebras does not form a variety. However, it would be interesting to know an equational basis of the variety of pseudo MV-algebras generated by the class of $(H,u)$-lexicographic (or weakly $(H,u)$-lexicographic) pseudo MV-algebras. For example, if $(H,u)=(\mathbb Z,1)$, the basis is $2.x^2=(2.x)^2$, see \cite[Rem 5.6]{DDT}.
}
\end{problem}

\section{Conclusion}

We have exhibited conditions when a pseudo MV-algebra $M$ can be represented as an interval in the lexicographic product of a fixed linearly ordered unital group $(H,u)$ with an $\ell$-group $G$, both groups are not necessarily Abelian. A crucial condition was the existence of a lexicographic normal ideal $I$ such that $M/I \cong \Gamma(H,u)$, or equivalently, $M$ is a strongly $(H,u)$-perfect pseudo MV-algebra (i.e. $M$ can be decomposed into a system of comparable slices indexed by elements of the interval $[0,u]_H$). Such algebras have a representation $M \cong \Gamma(H\lex G,(u,0))$, we called them also $(H,u)$-lexicographic pseudo MV-algebras, Theorem \ref{th:local1} and Theorem \ref{th:3.8}, and they are not semisimple. We have shown that the category of $(H,u)$-lexicographic pseudo MV-algebras is categorically equivalent to the category of $\ell$-groups, Theorem \ref{th:4.4}.

In addition, we have also studied conditions when a pseudo MV-algebra can be represented in the form $\Gamma(H\lex G,(u,b))$, where $b\in G^+$ is not necessarily the zero element. We call such algebras weakly $(H,u)$-lexicographic, or weakly $(H,u)$-perfect pseudo MV-algebras. A fundamental notion was a weak lexicographic ideal. Their representation is given in Theorem \ref{th:weak}. This category is categorically equivalent to the category of pointed $\ell$-groups whose objects are pairs $(G,b)$ where $b\in G^+$ is a fixed element, Theorem \ref{th:5.4}.

We hope that this research will inspired an additional study of MV-algebras and pseudo MV-algebras that are not semisimple.

\end{document}